\shorttitle{On pathwise uniform approximation} 
\newtheorem{fact}[thm]{Fact}
\theoremstyle{plain}
\begin{document}

\title{On pathwise uniform approximation \\
of processes with c\`{a}dl\`{a}g trajectories \\
by processes with minimal total variation}

\authorone[Warsaw School of Economics]{Rafa\L \ M. \L ochowski} 

\addressone{ul. Madali\'nskiego 6/8, 02-513 Warsaw Poland} 

\emailone{rlocho@sgh.waw.pl} 

\begin{abstract}
For a real c\`{a}dl\`{a}g function $f$ and positive constant $c$ we find
another c\`{a}dl\`{a}g function, which has the smallest total variation
possible among the functions uniformly approximating $f$ with accuracy $c/2$%
. The solution is expressed with the truncated variation, upward truncated
variation and downward truncated variation introduced in \cite
{Lochowski:2008} and \cite{Lochowski:2011}. They are analogs of Hahn-Jordan
decomposition of a c\`{a}dl\`{a}g function with finite total variation but
are always finite even if the total variation is infinite. We apply obtained results
to general stochastic processes with c\`{a}dl\`{a}g trajectories and in the
special case of Brownian motion with drift we apply them to obtain full
characterisation of its truncated variation by calculating its Laplace
transform. We also calculate covariance of upward and downward truncated
variations of Brownian motion with drift.
\end{abstract}


\keywords{total variation, truncated variation, uniform approximation, Brownian motion, Laplace transform} 

\ams{60G17}{60G15} 

\section{Introduction}

Let $X=\left( X_{t}\right) _{t\in \left[ a;b\right] }$ be a real valued
stochastic process with c\`{a}dl\`{a}g trajectories. In general, the total
path variation of $X,$ defined as 
\begin{equation*}
TV\left( X,\left[ a;b\right] \right) =\sup_{n}\sup_{a\leq
t_{0}<t_{1}<...<t_{n}\leq b}\sum_{i=1}^{n}\left|
X_{t_{i}}-X_{t_{i-1}}\right| ,
\end{equation*}
may be (and in many most important cases is)\ a.s. infinite. However, in the
neighborhood of every c\`{a}dl\`{a}g path we may easily find a function,
total variation of which is finite.

Let $f$ be a c\`{a}dl\`{a}g function $f:\left[ a;b\right] \rightarrow 
\mathbb{R} $ and let$\ c>0.$ The natural question arises, what is the
smallest possible\ (or infimum of) total variation of functions from the
ball $\left\{ g:\left\| f-g\right\| _{\infty }\leq \tfrac{1}{2}c\right\} ,$
where $\left\| f-g\right\| _{\infty }:=\sup_{s\in \left[ a;b\right] }\left|
f\left( s\right) -g\left( s\right) \right| .$ The bound from below reads as 
\begin{equation*}
TV\left( g,\left[ a;b\right] \right) \geq TV^{c}\left( f,\left[ a;b\right]
\right) ,
\end{equation*}
where 
\begin{equation}
TV^{c}\left( f,\left[ a;b\right] \right) :=\sup_{n}\sup_{a\leq
t_{0}<t_{1}<...<t_{n}\leq b}\sum_{i=1}^{n}\max \left\{ \left| f\left(
t_{i}\right) -f\left( t_{i-1}\right) \right| -c,0\right\}  \label{tv:def}
\end{equation}
and follows immediately from the inequality 
\begin{equation*}
\left| g\left( t_{i}\right) -g\left( t_{i-1}\right) \right| \geq \max
\left\{ \left| f\left( t_{i}\right) -f\left( t_{i-1}\right) \right|
-c,0\right\} .
\end{equation*}

In this paper we will show that in fact we have equality 
\begin{equation}
\inf \left\{ TV\left( f+h,\left[ a;b\right] \right) :\left\| h\right\|
_{\infty }\leq \tfrac{1}{2}c\right\} =TV^{c}\left( f,\left[ a;b\right]
\right) .  \label{tv2c}
\end{equation}
Moreover, we will show that for any $c\leq \sup_{s,u\in \left[ a;b\right]
}\left| f\left( s\right) -f\left( u\right) \right| $ there exist unique
c\`{a}dl\`{a}g function $h^{c}:\left[ a;b\right] \rightarrow \mathbb{R}$
such that $\left\| h^{c}\right\| _{\infty }\leq \tfrac{1}{2}c$ and for any $%
s\in \left( a;b\right] $ 
\begin{equation*}
TV\left( f+h^{c},\left[ a;s\right] \right) =TV^{c}\left( f,\left[ a;s\right]
\right) .
\end{equation*}

\begin{rem}
Since we deal with c\`{a}dl\`{a}g functions, more natural setting of our problem would be the investigation of 
\begin{equation*}
\inf \left\{ TV\left( f+h,\left[ a;b\right] \right) : h \text{ - c\`{a}dl\`{a}g}, d_{D}(f,f+h)\leq \tfrac{1}{2}c\right\}, 
\end{equation*}
where $d_{D}$ denotes Skorohod metric. Since the total variation does not depend on the (continuous and strictly increasing) change of variable $t$ and the function $h^c$ minimizing $ TV\left( f+h,\left[ a;b\right] \right)$ appears to be a c\`{a}dl\`{a}g one, solutions of both problems coincide.
\end{rem}

The bound (\ref{tv:def}) is called truncated variation and, although it is
optimal, it is not even clear when it is finite. However, for a given $c>0 $
and $p\geq 1$ from the inequality $\max \left\{ \left| x\right| -c,0\right\}
\leq \left| x\right| ^{p}/c^{p-1}$ it immediately follows that it is finite
whenever $p-$variation of the function $f$ is finite. Moreover, for any
c\`{a}dl\`{a}g function $f,$ $TV^{c}\left( f,\left[ a;b\right] \right) $ is
finite and (as it will be proved in the following sections) it is a
continuous, convex and decreasing function of the parameter $c>0$ although
the limit 
\begin{equation*}
\lim_{c\downarrow 0}\ TV^{c}\left( f,\left[ a;b\right] \right) =TV\left( f,%
\left[ a;b\right] \right)
\end{equation*}
may be infinite.

Since the total variation depends only on the increments of the function, a
more natural setting of our problem would be the following.

For a c\`{a}dl\`{a}g function $f:\left[ a;b\right] \rightarrow \mathbb{R}$
and $c>0$ find 
\begin{equation*}
\inf \left\{ TV\left( f+h,\left[ a;b\right] \right) :\left\| h\right\|
_{osc}\leq c\right\} ,
\end{equation*}
where $\left\| h\right\| _{osc}:=\sup_{s,u\in \left[ a;b\right] }\left|
h\left( s\right) -h\left( u\right) \right| .$ Note that $\left\| .\right\|
_{osc}$ is a norm on the classes of bounded functions which differ by a
constant.

Solution to this problem will be the same as the solution to the preceding
problem, i.e. 
\begin{equation}
\inf \left\{ TV\left( f+h,\left[ a;b\right] \right) :\left\| h\right\|
_{osc}\leq c\right\} =TV^{c}\left( f,\left[ a;b\right] \right) .
\label{tvceq}
\end{equation}
Moreover, there exists an optimal representative $h^{0,c}:\left[ a;b\right]
\rightarrow \mathbb{R}$ of the class of functions $h$ for which the equality
(\ref{tvceq}) is attained and such that $h^{0,c}\left( a\right) =0.$ The
connection between $h^{c}$ of the previous problem and $h^{0,c}$ is such
that for any $s\in \left[ a;b\right] ,$ 
\begin{equation*}
h^{c}\left( s\right) =\alpha _{0}+h^{0,c}\left( s\right) ,
\end{equation*}
where $\alpha _{0}= - \inf_{s\in \left[ a;b\right] }h^{0,c}\left( s\right) - 
\frac{1}{2}\left\| h^{0,c}\right\| _{osc}$ is the unique real for which 
\begin{equation*}
\left\| \alpha _{0}+h^{0,c}\right\| _{\infty }=\inf_{\alpha \in \mathbb{R}%
}\left\| \alpha +h^{0,c}\right\| _{\infty }=\tfrac{1}{2}\left\|
h^{0,c}\right\| _{osc}.
\end{equation*}

Moreover, we will prove that $h^{0,c}$ is a c\`{a}dl\`{a}g function with
possible jumps only in the points where function $f$ has jumps and that it
may be represented in the following form 
\begin{equation*}
h^{0,c}\left( s\right) =f\left( a\right) +UTV^{c}\left( f;\left[ a;s\right]
\right) -DTV^{c}\left( f;\left[ a;s\right] \right) -f\left( s\right) ,
\end{equation*}
where 
\begin{equation}
UTV^{c}\left( f,\left[ a;b\right] \right) :=\sup_{n}\sup_{a\leq
t_{0}<t_{1}<...<t_{n}\leq b}\sum_{i=1}^{n}\max \left\{ f\left( t_{i}\right)
-f\left( t_{i-1}\right) -c,0\right\} ,  \label{utv:def}
\end{equation}
\begin{equation}
DTV^{c}\left( f,\left[ a;b\right] \right) :=\sup_{n}\sup_{a\leq
t_{0}<t_{1}<...<t_{n}\leq b}\sum_{i=1}^{n}\max \left\{ f\left(
t_{i-1}\right) -f\left( t_{i}\right) -c,0\right\}  \label{dtv:def}
\end{equation}
are called upward and downward truncated variations of the function $f$
respectively. We will also show that 
\begin{equation}
TV^{c}\left( f,\left[ a;b\right] \right) =UTV^{c}\left( f,\left[ a;b\right]
\right) +DTV^{c}\left( f,\left[ a;b\right] \right) .  \label{sumutvdtv}
\end{equation}

Properties of truncated variation and two other quantities related - upward
and downward truncated variations, are up to some degree known. In
particular, in \cite{Lochowski:2011} there were calculated Laplace
transforms of $UTV^{c}\left( W,\left[ 0;S\right] \right) $ and $%
DTV^{c}\left( W,\left[ 0;S\right] \right) $ for $W_t=B_t + \mu t$ being a
standard Brownian motion with drift $\mu$ and $\left[ 0;S\right] $ being a
random interval, length of which is exponentially distributed and
independent from the underlying Brownian motion. 
\begin{rem}
In \cite{Lochowski:2011} the functionals $UTV^{c}\left( X,\left[ a;b\right]
\right)$ and $DTV^{c}\left( X,\left[ a;b\right] \right) $ were defined with
slightly different formulas, but it is easy to see that both definitions coincide.
\end{rem}
This also gives the full characterisation of the distribution of $%
UTV^{c}\left( W,\left[ 0;T\right] \right)$ and $DTV^{c}\left( W,\left[ 0;T%
\right]\right)$ for deterministic time $T$. But only the results of this
paper allowed to give the full characterisation of the distribution of $%
TV^{c}\left(W,\left[ 0;T\right]\right)$ and dependence structure between $%
UTV^{c}\left( W,\left[ 0;T\right] \right)$ and $DTV^{c}\left( W,\left[ 0;T%
\right]\right).$

The results of this paper also give a new interpretation of the results
obtained in \cite{Lochowski:2010fk} which may be stated in that way: for any
random path of $W,$ $W_{t}=\mu t+B_{t},t\in \left[ a;b\right] ,$ the total
variation of any random function $f:\left[ a;b\right] \rightarrow \mathbb{R}$
which is uniformly close to $W_t$ is bounded from below by 
\begin{equation*}
\frac{b-a}{2\varepsilon }+\sqrt{\frac{b-a}{3}}N_{\varepsilon },
\end{equation*}
where $\varepsilon =\sup_{t\in \left[ a;b\right] }\left| f\left( t\right)
-W_{t}\right| $ and $N_{\varepsilon }$ tends in distribution to a random
variable with a standard normal distribution $\mathcal{N}\left( 0;1\right) $
as $\varepsilon \downarrow 0.$

The paper is organized as follows. In the next section we introduce some
necessary definitions and notation, present the construction of the
functions $h^{c}$ and $h^{0,c}$ of the first and the second problem and
establish the connection between $h^{0,c}$ and truncated variation, upward
truncated variation and downward truncated variation. In the second section
we also summarize some general properties of (upward, downward) truncated
variation. In the third section we apply obtained results to general
processes with c\`{a}dl\`{a}g trajectories. In the last section we deal with
the Laplace transform of truncated variation, its moments and covariance
between upward and downward truncated variations of Brownian motion with
drift.

\section{Truncated variation, upward truncated variation and downward
truncated variation of a c\`{a}dl\`{a}g function - their optimality and
other properties}

\subsection{Definitions and notation}

In this subsection we introduce definitions and notation which will be used
throughout the whole section.

Let $-\infty <a<b<+\infty $ and let $f:\left[ a;b\right] \rightarrow \mathbb{%
R}$ be a c\`{a}dl\`{a}g function. For $c>0$ we define two stopping times 
\begin{gather*}
T_{D}^{c}f=\inf \left\{ s\geq a:\sup_{t\in \left[ a;s\right] }f\left(
t\right) -f\left( s\right) \geq c\right\} , \\
T_{U}^{c}f=\inf \left\{ s\geq a:f\left( s\right) -\inf_{t\in \left[ a;s%
\right] }f\left( t\right) \geq c\right\} .
\end{gather*}

Assume that $T_{D}^{c}f\geq T_{U}^{c}f$ i.e. the first upward jump of
function $f$ of size $c$ appears before the first downward jump of the same
size $c$ or both times are infinite (there is no upward or downward jump of
size $c$). Note that in the case $T_{D}^{c}f<T_{U}^{c}f$ we may simply
consider function $-f.$ Now we define sequences $\left( T_{U,k}^{c}\right)
_{k=0}^{\infty },\left( T_{D,k}^{c}\right) _{k=-1}^{\infty },$ in the
following way: $T_{D,-1}^{c}=a,$ $T_{U,0}^{c}=T_{U}^{c}f$ and for $%
k=0,1,2,...$%
\begin{gather*}
T_{D,k}^{c}=\left\{ 
\begin{array}{lr}
\inf \left\{ s\in \left[ T_{U,k}^{c};b\right] :\sup_{t\in \left[
T_{U,k}^{c};s\right] }f\left( t\right) -f\left( s\right) \geq c\right\} & 
\text{ if }T_{U,k}^{c}<b, \\ 
\infty & \text{ if }T_{U,k}^{c}\geq b,
\end{array}
\right. \\
T_{U,k+1}^{c}=\left\{ 
\begin{array}{lr}
\inf \left\{ s\in \left[ T_{D,k}^{c};b\right] :f\left( s\right) -\inf_{t\in 
\left[ T_{D,k}^{c};s\right] }f\left( t\right) \geq c\right\} & \text{ if }%
T_{D,k}^{c}<b, \\ 
\infty & \text{ if }T_{D,k}^{c}\geq b.
\end{array}
\right. 
\end{gather*}
\begin{rem}
\label{finK} Note that there exists such $K<\infty $ that $%
T_{U,K}^{c}=\infty $ or $T_{D,K}^{c}=\infty .$ Otherwise we would obtain two
infinite sequences $\left( s_{k}\right) _{k=1}^{\infty },\left( S_{k}\right)
_{k=1}^{\infty }$ such that $a\leq s_{1}<S_{1}<s_{2}<S_{2}<...$ $\leq b$ and 
$f\left( S_{k}\right) -f\left( s_{k}\right) \geq \tfrac{1}{2}c.$ But this is
a contradiction, since $f$ is a c\`{a}dl\`{a}g function and $\left( f\left(
s_{k}\right) \right) _{k=1}^{\infty },\left( f\left( S_{k}\right) \right)
_{k=1}^{\infty }$ have a common limit.
\end{rem}
\begin{rem}
\label{Norvaisa}
There exists such $K<\infty $ that 
$T_{U,K}^{c}=\infty $ or $T_{D,K}^{c}=\infty $ also in a more general case - when 
we assume that $f$ is not necessary a c\`{a}dl\`{a}g but only a \emph{regulated} function (cf. \cite[Corollary 2.2]{DudleyNorvaisa:2011}).
\end{rem}
Now let us define two sequences of non-decreasing functions $m_{k}^{c}:\left[
T_{D,k-1}^{c};T_{U,k}^{c}\right) \cap \lbrack a;b]\rightarrow \mathbb{R}$
and $M_{k}^{c}:\left[ T_{U,k}^{c};T_{D,k}^{c}\right) \cap \lbrack
a;b]\rightarrow \mathbb{R}$ for such $k$ that $T_{D,k-1}^{c}<\infty $ and $%
T_{U,k}^{c}<\infty $\ respectively, with the formulas 
\begin{equation*}
m_{k}^{c}\left( s\right) =\inf_{t\in \left[ T_{D,k-1}^{c};s\right] }f\left(
t\right) ,^{{}}M_{k}^{c}\left( s\right) =\sup_{t\in \left[ T_{U,k}^{c};s%
\right] }f\left( t\right) .
\end{equation*}

Next we define two finite sequences of real numbers $\left( m_{k}^{c}\right) 
$ and $\left( M_{k}^{c}\right) ,$ for such $k$ that $T_{D,k-1}^{c}<\infty $
and $T_{U,k}^{c}<\infty $\ respectively, with the formulas 
\begin{eqnarray*}
m_{k}^{c} &=&m_{k}^{c}\left( T_{U,k}^{c}-\right) =\inf_{t\in \left[
T_{D,k-1}^{c};T_{U,k}^{c}\right) \cap [a;b] }f\left( t\right) , \\
M_{k}^{c} &=&M_{k}^{c}\left( T_{D,k}^{c}-\right) =\sup_{t\in \left[
T_{U,k}^{c};T_{D,k}^{c}\right)\cap [a;b] }f\left( t\right) .
\end{eqnarray*}

\subsection{Solution of the first problem}

\label{sol1problem}

Let $f:\left[ a;b\right] \rightarrow \mathbb{R}$ be a fixed c\`{a}dl\`{a}g
function. In this subsection we will solve the following problem: \emph{what
is the smallest possible\ (or infimum of) total variation of functions from
the ball $\left\{ g:\left\| f-g\right\| _{\infty }\leq \tfrac{1}{2}c\right\}
?$ }

In order to solve this problem we start with results concerning
c\`{a}dl\`{a}g functions. We apply the definitions of the previous
subsection to the function $f$ and assume that $T_{D}^{c}f\geq T_{U}^{c}f.$
Define the function $f^{c}:\left[ a;b\right] \rightarrow \mathbb{R}$ with
the formulas 
\begin{equation*}
f^{c}\left( s\right) =\left\{ 
\begin{array}{lr}
m_{0}^{c}+c/2 & \text{ if }s\in \left[ a;T_{U,0}^{c}\right) ; \\ 
M_{k}^{c}\left( s\right) -c/2 & \text{ if }s\in \left[
T_{U,k}^{c};T_{D,k}^{c}\right) ,k=0,1,2,...; \\ 
m_{k+1}^{c}\left( s\right) +c/2 & \text{ if }s\in \left[
T_{D,k}^{c};T_{U,k+1}^{c}\right) ,k=0,1,2,....
\end{array}
\right.
\end{equation*}

\begin{rem}
Note that due to Remark \ref{finK}, $b$ belongs to one of the intervals $%
\left[ T_{U,k}^{c};T_{D,k}^{c}\right) $ or $\left[ T_{D,k}^{c};T_{U,k+1}^{c}%
\right) $ for some $k=0,1,2,...$ and the function $f^{c}$ is defined for
every $s\in \lbrack a;b].$
\end{rem}

\begin{rem}
One may think about the function $f^{c}$ as of the most ''lazy'' function
possible, which changes its value only if it is necessary for the relation $%
\left\| f-f^{c}\right\| _{\infty }\leq c/2$ to hold. The choice of
its starting value will become clear in the sequel.
\end{rem}

\begin{rem}
In the case $T_{D}^{c}f<T_{U}^{c}f$ we may apply the definitions of the
previous subsection to the function $-f$ and simply define $f^{c}=-(-f)^{c}.$
Thus we will assume that the mapping $f\mapsto f^{c}$ is defined for any 
c\`{a}dl\`{a}g function. Similarly, in all the proofs of this section we
will assume $T_{D}^{c}f\leq T_{U}^{c}f,$ but all results of this section
(i.e. Lemma \ref{lem1}, Theorem \ref{thm1}, Corollary \ref{cor1}, Lemma \ref
{lem2}, Theorem \ref{thm2}, Corollary \ref{cor2} and Theorem \ref{THMM})
apply to any c\`{a}dl\`{a}g function $f.$ Obvious modifications are only
necessary in the definition of the stopping times $T_{U,k}^{c}$ and $%
T_{D,k}^{c}$ and then the functions $f_{U}^{c}$ and $f_{D}^{c}$ of Theorem 
\ref{thm1}.
\end{rem}

We have the following

\begin{lem}
\label{lem1} The function $f^{c}$ uniformly approximates the function $f$ with
accuracy $c/2$ and has finite total variation. Moreover $f^{c}$ is
a c\`{a}dl\`{a}g function and every point of the discontinuity of $f^{c}$ is also a point of discontinuity of the function $f.$
\end{lem}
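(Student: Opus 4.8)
The plan is to verify the three assertions of Lemma~\ref{lem1} in turn: (i) $\left\| f-f^{c}\right\|_{\infty}\leq c/2$, (ii) $TV\left(f^{c},\left[a;b\right]\right)<\infty$, and (iii) $f^{c}$ is c\`{a}dl\`{a}g with jumps contained in the jump set of $f$. The key structural fact I would rely on throughout is Remark~\ref{finK}: there is a finite $K$ such that $b$ falls in one of the intervals $\left[T_{U,k}^{c};T_{D,k}^{c}\right)$ or $\left[T_{D,k}^{c};T_{U,k+1}^{c}\right)$, so $f^{c}$ is pieced together from finitely many monotone blocks.

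For (i), I would argue block by block. On $\left[a;T_{U,0}^{c}\right)$ the definition of $T_{U,0}^{c}=T_{U}^{c}f$ means $f(s)-\inf_{t\in[a;s]}f(t)<c$ for $s$ in this interval, and since $f^{c}=m_{0}^{c}+c/2$ there (a constant), while $m_0^c = \inf_{t\in[a;T_{U,0}^c)}f(t)$, one checks $\left|f(s)-f^{c}(s)\right|\leq c/2$ from $m_0^c \le f(s) < m_0^c + c$. On a block $\left[T_{U,k}^{c};T_{D,k}^{c}\right)$ we have $f^{c}(s)=M_{k}^{c}(s)-c/2=\sup_{t\in[T_{U,k}^{c};s]}f(t)-c/2$; by definition of $T_{D,k}^{c}$, for such $s$ the running maximum has not yet dropped by $c$, i.e.\ $0\le M_{k}^{c}(s)-f(s)<c$, which gives $\left|f(s)-f^{c}(s)\right|\leq c/2$ again. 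The downward blocks are symmetric (or follow by applying the same argument to $-f$). One must also check the left endpoints match up correctly so the piecewise definition is consistent; this is where the specific choice of starting value $m_0^c+c/2$ and the relations between $M_k^c$, $m_{k+1}^c$ at the switch times $T_{D,k}^c$, $T_{U,k}^c$ come in.

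For (ii), on each block $f^{c}$ is monotone (non-decreasing, being $M_{k}^{c}(\cdot)-c/2$ or $m_{k+1}^{c}(\cdot)+c/2$, or constant), so its variation over that block is just the difference of endpoint values. Summing the finitely many block variations plus the finitely many jumps at the switch times $T_{U,k}^{c},T_{D,k}^{c}$ gives a finite bound; more quantitatively, each monotone block's oscillation is controlled (up to constants of order $c$) by the oscillation of $f$ on the corresponding interval, and there are at most $K$ blocks.

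For (iii), c\`{a}dl\`{a}g-ness: within the interior of each block, $f^{c}$ is a running supremum or infimum of the c\`{a}dl\`{a}g function $f$, hence itself c\`{a}dl\`{a}g (running $\sup$/$\inf$ of a c\`{a}dl\`{a}g function is c\`{a}dl\`{a}g). At the finitely many switch points one must check one-sided limits exist and right-continuity holds, which follows from the continuity of $s\mapsto M_k^c(s)$ and $s\mapsto m_k^c(s)$ at points where $f$ is continuous and from matching the block formulas. For the jump-containment claim: if $f$ is continuous at a point $s_0$, then the running $\sup$/$\inf$ defining $f^{c}$ on the relevant block is continuous at $s_0$, and at a switch time $T_{U,k}^{c}$ or $T_{D,k}^{c}$ the function $f$ itself must jump by exactly $c$ (that is how these times are defined — the running extremum deviation reaches $c$, and by c\`{a}dl\`{a}g-ness a deviation jumping to $\ge c$ forces a jump of $f$), so $f^{c}$ can only be discontinuous where $f$ is. The main obstacle I anticipate is the bookkeeping at the switch times $T_{U,k}^{c}$ and $T_{D,k}^{c}$: one has to check carefully that the three-line definition of $f^{c}$ glues consistently (the block-endpoint values agree or jump appropriately), that the deviation bound $c/2$ is not violated exactly at these switch points, and that any discontinuity there is inherited from a genuine jump of $f$ of size $\ge c$ rather than being an artefact of switching formulas.
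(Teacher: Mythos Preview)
Your approach for parts (i) and (ii) is essentially the paper's: the same block-by-block case analysis for the uniform bound, and the same observation that $f^{c}$ is monotone on each of finitely many intervals (Remark~\ref{finK}) for the finite variation. (Minor slip: $m_{k+1}^{c}(\cdot)+c/2$ is non-\emph{increasing}, not non-decreasing, but you did write ``monotone'' first.)

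The genuine gap is in (iii), at the switch times. You assert that at $T_{U,k}^{c}$ or $T_{D,k}^{c}$ ``the function $f$ itself must jump by exactly $c$'', but this is false: for continuous $f$ (e.g.\ Brownian paths, the paper's main application) these stopping times are perfectly well-defined yet $f$ has no jumps at all. The running deviation $f(s)-\inf_{[T_{D,k-1}^{c};s]}f$ can reach $c$ continuously. What must actually be shown is the converse implication: \emph{if} $f$ is continuous at a switch time, \emph{then} so is $f^{c}$. The paper does this by direct computation: if $f$ is continuous at $T_{U,i}^{c}$, then by definition of that stopping time as an infimum and by continuity, the deviation equals exactly $c$ there, i.e.\ $f(T_{U,i}^{c})=m_{i}^{c}+c$, whence
\[
f^{c}(T_{U,i}^{c}-)=m_{i}^{c}+c/2=f(T_{U,i}^{c})-c/2=M_{i}^{c}(T_{U,i}^{c})-c/2=f^{c}(T_{U,i}^{c}),
\]
and symmetrically at $T_{D,i}^{c}$. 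Your closing paragraph correctly anticipates that this bookkeeping is the crux, but the justification you actually supply (``a deviation jumping to $\ge c$ forces a jump of $f$'') does not establish it.
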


\begin{proof}
Let us fix $s\in \left[ a;b\right].$ We have three possibilities.
\begin{itemize}
\item  $s\in \left[ a;T_{U,0}^{c}\right) .$ In this case, since $a\leq
s<T_{U}^{c}f \leq T_{D}^{c}f,$ 
\begin{equation*}
f\left( s\right) -f^{c}\left( s\right) =f\left( s\right) -\inf_{t\in \left[
a;T_{U,0}^{c}\right) }f\left( t\right) -c/2\in \left[ -c/2;c/2\right) .
\end{equation*}

\item  $s\in \left[ T_{U,k}^{c};T_{D,k}^{c}\right) ,$ for some $k=0,1,2,...
$ In this case $M_{k}^{c}\left( s\right) -f\left( s\right) $ belongs to the
interval $\left[ 0;c\right) ,$ hence 
\begin{equation*}
f\left( s\right) -f^{c}\left( s\right) =f\left( s\right) -M_{k}^{c}\left(
s\right) +c/2\in \left( -c/2;c/2\right] .
\end{equation*}

\item  $s\in \left[ T_{D,k}^{c};T_{U,k+1}^{c}\right) $ for some $%
k=0,1,2,...$ In this case $f\left( s\right) -m_{k+1}^{c}\left( s\right) $
belongs to the interval $\left[ 0,c\right) ,$ hence 
\begin{equation*}
f\left( s\right) -f^{c}\left( s\right) =f\left( s\right) -m_{k+1}^{c}\left(
s\right) -c/2 \in \left[ -c/2;c/2\right) .
\end{equation*}
\end{itemize}

Function $f^{c}$ has finite total variation since it is non-decreasing on
the intervals $\left[ T_{U,k}^{c};T_{D,k}^{c}\right) ,k=0,1,2,...$ and
non-increasing on the intervals $\left[ T_{D,k}^{c};T_{U,k+1}^{c}\right)
,k=0,1,2,...,$ and it has finite number of jumps between these
intervals.

For similar reason, function $f^{c}$ has left and right limits. To see that
it is right-continuous, let us fix $s\in \left[ a;b\right] $ and notice that
by definition of $f^{c},$ for $t \in \left( s; b \right]$ sufficiently close to $s,$ 
\begin{equation*}
f^{c}\left( t\right) =\inf_{u\in \left[ s;t\right] }f^c\left( u\right)  \text{
or }f^{c}\left( t\right) =\sup_{u\in \left[ s;t\right] }f^c\left( u\right),
\end{equation*}
and the assertion follows from the right-continuity of the function $f.$

Similar argument may be applied to prove that $f^{c}$ is continuous in every
point of continuity of $f$ except the points 
$T_{U,0}^{c},T_{D,0}^{c},T_{U,1}^{c},T_{D,1}^{c},...;$ but if $s=T_{D,i}^{c}$
and $f$ is continuous at the point $s$ then it means that $f\left(
T_{U,i}^{c}-\right) =f\left( T_{U,i}^{c}\right) =\inf_{t\in \left[
T_{D,i-1}^{c};T_{U,i}^{c}\right) }f\left( t\right) +c$ and 
\begin{equation*}
f^{c}\left( T_{U,i}^{c}-\right) =\inf_{t\in \left[ T_{D,i-1}^{c};T_{U,i}^{c}%
\right) }f\left( t\right) +c/2=f\left( T_{U,i}^{c}\right) - c/2 =f^{c}\left( T_{U,i}^{c}\right) .
\end{equation*}
Similar argument applies when $s=T_{D,i}^{c}.$

\end{proof}

Since $f^{c}$ is of finite total variation, we know that there exist such
two non-decreasing functions $f_{U}^{c}$ and $f_{D}^{c}:\left[ a;b\right]
\rightarrow \left[ 0;+\infty \right) $ that $f^{c}\left( t\right)
=f^{c}\left( a\right) +f_{U}^{c}\left( t\right) -f_{D}^{c}\left( t\right) .$

Let us examine the sign of the jumps of function $f^{c}$ between intervals $%
\left[ T_{U,k}^{c};T_{D,k}^{c}\right) $ and $\left[
T_{D,k}^{c};T_{U,k+1}^{c}\right) .$ Due to c\`{a}dl\`{a}g property we have 
\begin{eqnarray*}
f^{c}\left( T_{U,k}^{c}\right) -f^{c}\left( T_{U,k}^{c}-\right)
&=&f^{c}\left( T_{U,k}^{c}\right) -m_{k}^{c}-c \\
&=&f\left( T_{U,k}^{c}\right) -\inf_{t\in \left[ T_{D,k-1}^{c};T_{U,k}^{c}%
\right) }f\left( t\right) -c\geq 0, \\
f^{c}\left( T_{D,k}^{c}\right) -f^{c}\left( T_{D,k}^{c}-\right)
&=&f^{c}\left( T_{D,k}^{c}\right) -M_{k}^{c}+2c \\
&=&-\left\{ \sup_{t\in \left[ T_{U,k}^{c};T_{D,k}^{c}\right) }f\left(
t\right) -f\left( T_{D,k}^{c}\right) \right\} +c\leq 0.
\end{eqnarray*}
Hence we may set $f_{U}^{c}\left( s\right) =f_{D}^{c}\left( s\right) =0$ for 
$s\in \left[ a;T_{U,0}^{c}\right) ,$ 
\begin{equation*}
f_{U}^{c}\left( s\right) =\left\{ 
\begin{array}{lr}
\sum_{i=0}^{k-1}\left\{ M_{i}^{c}-m_{i}^{c}-c\right\} +M_{k}^{c}\left(
s\right) -m_{k}^{c}-c & \text{ if }s\in \left[ T_{U,k}^{c};T_{D,k}^{c}%
\right) ; \\ 
\sum_{i=0}^{k}\left\{ M_{i}^{c}-m_{i}^{c}-c\right\} & \text{ if }s\in \left[
T_{D,k}^{c};T_{U,k+1}^{c}\right)
\end{array}
\right.
\end{equation*}
and 
\begin{equation*}
f_{D}^{c}\left( s\right) =\left\{ 
\begin{array}{lr}
\sum_{i=0}^{k-1}\left\{ M_{i}^{c}-m_{i+1}^{c}-c\right\} & \text{ if }s\in 
\left[ T_{U,k}^{c};T_{D,k}^{c}\right) ; \\ 
\sum_{i=0}^{k-1}\left\{ M_{i}^{c}-m_{i+1}^{c}-c\right\}
+M_{k}^{c}-m_{k+1}^{c}\left( s\right) -c & \text{ if }s\in \left[
T_{D,k}^{c};T_{U,k+1}^{c}\right) .
\end{array}
\right.
\end{equation*}

Now we will prove the following

\begin{thm}
\label{thm1} If $g:\left[ a;b\right] \rightarrow \mathbb{R}$ uniformly
approximates $f$ with accuracy $c/2,$ has finite total variation
and $g_{U},g_{D}:\left[ a;b\right] \rightarrow \left[ 0;+\infty \right) $
are such two non-decreasing functions that $g\left( t\right) =g\left(
a\right) +g_{U}\left( t\right) -g_{D}\left( t\right) ,t\in \left[ a;b\right]
,$ then for any $s\in \left[ a;b\right] $%
\begin{equation}
g_{U}\left( s\right) \geq f_{U}^{c}\left( s\right) \text{ and }g_{D}\left(
s\right) \geq f_{D}^{c}\left( s\right) .  \label{thm3:eq}
\end{equation}
\end{thm}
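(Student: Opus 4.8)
The plan is to show, inductively along the blocks determined by the stopping times $T_{U,k}^{c}$ and $T_{D,k}^{c}$, that on each such block the nondecreasing components $g_{U}$ and $g_{D}$ of $g$ must have increased by at least as much as the corresponding components $f_{U}^{c}$ and $f_{D}^{c}$ of $f^{c}$. The engine behind every step is the elementary observation that if $u<v$ and $g$ approximates $f$ with accuracy $c/2$, then
\begin{equation*}
g_{U}(v)-g_{U}(u)\geq g(v)-g(u)\geq f(v)-f(u)-c,
\end{equation*}
and symmetrically $g_{D}(v)-g_{D}(u)\geq f(u)-f(v)-c$; moreover $g_{U}$ and $g_{D}$ are nondecreasing, so one may always replace $g(v)-g(u)$ by a supremum/infimum of increments over intermediate points. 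The base case is $s\in[a;T_{U,0}^{c})$, where $f_{U}^{c}=f_{D}^{c}=0$ and the inequalities are trivial.

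First I would treat an \emph{up-block} $s\in[T_{U,k}^{c};T_{D,k}^{c})$. By the induction hypothesis we already know $g_{U}(T_{U,k}^{c})\geq f_{U}^{c}(T_{U,k}^{c})=\sum_{i=0}^{k-1}(M_{i}^{c}-m_{i}^{c}-c)$ (and similarly for $g_{D}$), so it suffices to bound the increment of $g_{U}$ from $T_{U,k}^{c}$ to $s$ from below by $M_{k}^{c}(s)-m_{k}^{c}-c$. Pick $t^{*}\in[T_{U,k}^{c};s]$ with $f(t^{*})$ arbitrarily close to $M_{k}^{c}(s)$ and pick $t_{*}\in[T_{D,k-1}^{c};T_{U,k}^{c})$ with $f(t_{*})$ arbitrarily close to $m_{k}^{c}$; then, using that $g_{U}$ is nondecreasing together with the displayed inequality applied to the pair $(t_{*},t^{*})$,
\begin{equation*}
g_{U}(s)-g_{U}(T_{U,k}^{c})\geq g_{U}(s)-g_{U}(t_{*})\geq f(t^{*})-f(t_{*})-c,
\end{equation*}
and letting $t^{*},t_{*}$ run to the respective extrema gives $g_{U}(s)-g_{U}(T_{U,k}^{c})\geq M_{k}^{c}(s)-m_{k}^{c}-c$, which is what we need. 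For the $g_{D}$ estimate on the same block one checks that $f_{D}^{c}$ is constant there, so the induction hypothesis alone suffices. The \emph{down-block} $s\in[T_{D,k}^{c};T_{U,k+1}^{c})$ is handled by the mirror-image argument: $f_{U}^{c}$ is constant, while the increment of $g_{D}$ is bounded below by $M_{k}^{c}-m_{k+1}^{c}(s)-c$ using a near-maximiser in $[T_{U,k}^{c};T_{D,k}^{c})$ and a near-minimiser in $[T_{D,k}^{c};s]$.

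Finally I would glue the blocks: since there are only finitely many of them (Remark~\ref{finK}), adding the block-by-block increments telescopes to exactly the closed-form expressions defining $f_{U}^{c}(s)$ and $f_{D}^{c}(s)$, giving \eqref{thm3:eq} for all $s\in[a;b]$. The one point requiring care—the main obstacle—is the bookkeeping at the endpoints of the blocks: one must use the right-continuity of $g_{U},g_{D}$ and the definitions $m_{k}^{c}=m_{k}^{c}(T_{U,k}^{c}-)$, $M_{k}^{c}=M_{k}^{c}(T_{D,k}^{c}-)$ so that the "near-extremiser" in one block really can be chosen in the half-open interval used to define the relevant $m$ or $M$, and so that the induction hypothesis at the left endpoint of a block is available in the form $g_{U}(T_{U,k}^{c})\geq f_{U}^{c}(T_{U,k}^{c})$ rather than just $g_{U}(T_{U,k}^{c}-)\geq f_{U}^{c}(T_{U,k}^{c}-)$. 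Once these interval-endpoint matters are set up consistently, each block estimate is a one-line consequence of monotonicity of $g_{U},g_{D}$ and the accuracy constraint.
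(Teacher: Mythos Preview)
Your approach is essentially the paper's: both bound the increase of $g_{U}$ across each block by $M_{i}^{c}-m_{i}^{c}-c$ using monotonicity of $g_{U},g_{D}$ together with the $c/2$-approximation, then add the blocks (the paper sums directly; you phrase it as induction). However, two slips in your execution need to be fixed.

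First, the displayed inequality $g_{U}(s)-g_{U}(T_{U,k}^{c})\geq g_{U}(s)-g_{U}(t_{*})$ is backwards: since $t_{*}<T_{U,k}^{c}$ and $g_{U}$ is nondecreasing, you have $g_{U}(t_{*})\leq g_{U}(T_{U,k}^{c})$, so the inequality reverses. The clean repair is to anchor the induction not at $T_{U,k}^{c}$ but at $t_{*}$ itself, which lies in the previous down-block $[T_{D,k-1}^{c};T_{U,k}^{c})$ where the induction hypothesis gives $g_{U}(t_{*})\geq f_{U}^{c}(t_{*})=\sum_{i=0}^{k-1}(M_{i}^{c}-m_{i}^{c}-c)$; then
\[
g_{U}(s)-g_{U}(t_{*})\;\geq\;g_{U}(t^{*})-g_{U}(t_{*})\;\geq\;(g_{U}-g_{D})(t^{*})-(g_{U}-g_{D})(t_{*})\;=\;g(t^{*})-g(t_{*})\;\geq\;f(t^{*})-f(t_{*})-c,
\]
and passing to sup/inf over $t^{*},t_{*}$ yields $g_{U}(s)\geq f_{U}^{c}(s)$ directly. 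This is exactly the paper's chain of inequalities. Second, your stated value $f_{U}^{c}(T_{U,k}^{c})=\sum_{i=0}^{k-1}(M_{i}^{c}-m_{i}^{c}-c)$ is wrong if $f$ jumps at $T_{U,k}^{c}$ (there is an additional nonnegative term $f(T_{U,k}^{c})-m_{k}^{c}-c$), and the ``right-continuity of $g_{U},g_{D}$'' you invoke at the end is neither assumed in the theorem nor needed: once you anchor at $t_{*}$ in the open previous block, no endpoint evaluation arises.
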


\begin{proof}
Again, we consider three cases.

\begin{itemize}
\item  $s\in \left[ a;T_{U,0}^{c}\right) .$ In this case $g_{U}\left(
s\right) \geq 0=f_{U}^{c}\left( s\right) $ as well as $g_{D}\left( s\right)
\geq 0=f_{D}^{c}\left( s\right) $

\item  $s\in \left[ T_{U,k}^{c};T_{D,k}^{c}\right) ,$ for some $k=0,1,2,...
$ In this case, from the fact that $g$ uniformly approximates $f$ with
accuracy $c/2$ and from the fact that $g_{U},g_{D}$ are non-decreasing, for $i=0,1,2,...k-1$ we get 
\begin{align*}
& \sup_{s_{i}\in \left[ T_{U,i}^{c};T_{D,i}^{c}\right) }g_{U}\left(
s_{i}\right) -\inf_{s_{i}\in \left[ T_{D,i-1}^{c};T_{U,i}^{c}\right)
}g_{U}\left( s_{i}\right)  \\
& \geq \sup_{s_{i}\in \left[ T_{U,i}^{c};T_{D,i}^{c}\right) }\left(
g_{U}-g_{D}\right) \left( s_{i}\right) -\inf_{s_{i}\in \left[
T_{D,i-1}^{c};T_{U,i}^{c}\right) }\left( g_{U}-g_{D}\right) \left(
s_{i}\right)  \\
& =\sup_{s_{i}\in \left[ T_{U,i}^{c};T_{D,i}^{c}\right) }g\left(
s_{i}\right) -\inf_{s_{i}\in \left[ T_{D,i-1}^{c};T_{U,i}^{c}\right)
}g\left( s_{i}\right)  \\
& \geq \sup_{s_{i}\in \left[ T_{U,i}^{c};T_{D,i}^{c}\right) }\left\{
f\left( s_{i}\right) -c/2\right\} -\inf_{s_{i}\in \left[
T_{D,i-1}^{c};T_{U,i}^{c}\right) }\left\{ f\left( s_{i}\right) +c/2\right\} 
\\
& =M_{i}^{c}-m_{i}^{c}-c.
\end{align*}
Similarly 
\begin{align*}
& g_{U}\left( s\right) -\inf_{s_{k}\in \left[ T_{D,k-1}^{c};T_{U,k}^{c}%
\right) }g_{U}\left( s_{k}\right)  \\
& =\sup_{t\in \left[ T_{U,k}^{c};s\right] }g_{U}\left( t\right)
-\inf_{s_{k}\in \left[ T_{D,k-1}^{c};T_{U,k}^{c}\right) }g_{U}\left(
s_{k}\right)  \\
& \geq \sup_{t\in \left[ T_{U,k}^{c};s\right] }\left( g_{U}-g_{D}\right)
\left( t\right) -\inf_{s_{k}\in \left[ T_{D,k-1}^{c};T_{U,k}^{c}\right)
}\left( g_{U}-g_{D}\right) \left( s_{k}\right)  \\
& =\sup_{t\in \left[ T_{U,k}^{c};s\right] }g\left( t\right) -\inf_{s_{k}\in %
\left[ T_{D,k-1}^{c};T_{U,k}^{c}\right) }g\left( s_{k}\right)  \\
& \geq \sup_{t\in \left[ T_{U,k}^{c};s\right] }\left\{ f\left( t\right)
-c/2\right\} -\inf_{s_{k}\in \left[ T_{D,k-1}^{c};T_{U,k}^{c}\right)
}\left\{ f\left( s_{k}\right) +c/2\right\}  \\
& =M_{k}^{c}\left( s\right) -m_{k}^{c}-c.
\end{align*}
Summing up the above inequalities and using monotonicity of $g_{U}$\ we
finally get 
\begin{equation*}
g_{U}\left( s\right) \geq \sum_{i=0}^{k-1}\left\{
M_{i}^{c}-m_{i}^{c}-c\right\} +M_{k}^{c}\left( s\right)
-m_{k}^{c}-c=f_{U}^{c}\left( s\right) .
\end{equation*}

The proof of the corresponding inequality for $g_{D}$ follows similarly and
we get
\begin{equation*}
g_{D}\left( s\right) \geq \sum_{i=0}^{k-1}\left\{
M_{i}^{c}-m_{i+1}^{c}-c\right\} =f_{D}^{c}\left( s\right) .
\end{equation*}

\item  $s\in \left[ T_{D,k}^{c};T_{U,k+1}^{c}\right) $
The proof follows similarly as in the previous case.
\end{itemize}
\end{proof}

From Theorem \ref{thm1} we immediately get that the decomposition 
\begin{equation}
f^{c}\left( s\right) =f^{c}\left( a\right) +f_{U}^{c}\left( s\right)
-f_{D}^{c}\left( s\right)  \label{decomp}
\end{equation}
is minimal (cf. \cite{Revuz:1991kx}, page 5) thus the total variation of the
function $f^{c}$ on the interval $\left[ a;s\right] $ equals $%
f_{U}^{c}\left( s\right) +f_{D}^{c}\left( s\right) .$ 
\begin{rem}
\label{fUfDcadlag} From Lemma \ref{lem1} and the minimality of the decomposition
(\ref{decomp}) it follows that $f_{U}^{c}$ and $f_{U}^{c}$ are also c\`{a}dl\`{a}g 
functions and that every point of their discontinuity is also a point
of discontinuity of the function $f.$ Moreover, due to the minimality of the variation of the 
function $f^c,$ any jump of $f^c$ is no greater than the jump of the function $f.$
\end{rem}
We also have 
\begin{corollary}
\label{cor1} The function $f^{c}$ is optimal i.e. if $g:\left[ a;b\right]
\rightarrow \mathbb{R}$ is such that $\left\| f-g\right\| _{\infty }\leq 
c/2$ and has finite total variation, then for every $s\in \left[ a;b\right] $ 
\begin{equation*}
TV\left( g,\left[ a;s\right] \right) \geq TV\left( f^{c},\left[ a;s\right]
\right) .
\end{equation*}
Moreover, it is unique in such a sense that if for every $s\in \left[ a;b\right] $ 
the opposite inequality holds 
\begin{equation*}
TV\left( g,\left[ a;s\right] \right) \leq TV\left( f^{c},\left[ a;s\right]
\right) 
\end{equation*}
and $c\leq \sup_{s,u\in \lbrack a;b]}|f(s)-f(u)|$ then $g=f^{c}.$
\end{corollary}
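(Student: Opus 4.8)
The first (optimality) assertion follows directly from Theorem \ref{thm1}: if $g$ has finite total variation with $\|f-g\|_\infty\le c/2$ and $g=g(a)+g_U-g_D$ is its minimal decomposition into non-decreasing functions, then $TV(g,[a;s])=g_U(s)+g_D(s)\ge f_U^c(s)+f_D^c(s)=TV(f^c,[a;s])$, the last equality being the minimality of (\ref{decomp}) established just above. So no real work is needed there; the plan is simply to spell out this one line.

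The substance is the uniqueness statement, and I would argue by contradiction. Suppose $g$ satisfies $\|f-g\|_\infty\le c/2$, $TV(g,[a;s])\le TV(f^c,[a;s])$ for all $s$, and $c\le\sup_{s,u}|f(s)-f(u)|$ (so that $T_{U,0}^c<\infty$, i.e.\ $f^c$ is genuinely non-constant). Combining the two inequalities with the optimality half already proved, we get $TV(g,[a;s])=TV(f^c,[a;s])=f_U^c(s)+f_D^c(s)$ for every $s$, and since each $g_U(s)\ge f_U^c(s)$, $g_D(s)\ge f_D^c(s)$ by Theorem \ref{thm1} while their sums agree, we must have $g_U\equiv f_U^c$ and $g_D\equiv f_D^c$ on $[a;b]$. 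Hence $g(s)-g(a)=f^c(s)-f^c(a)$ for all $s$, i.e.\ $g=f^c+\beta$ for a constant $\beta=g(a)-f^c(a)$. It then remains to show $\beta=0$, and this is where the hypothesis $c\le\sup|f(s)-f(u)|$ is used: on the intervals $[T_{U,k}^c;T_{D,k}^c)$ the function $f^c$ attains values making $f-f^c$ reach the endpoint $-c/2$ of $[-c/2;c/2)$ arbitrarily closely (namely where $f(s)=M_k^c(s)$), and on the intervals $[T_{D,k}^c;T_{U,k+1}^c)$ it makes $f-f^c$ approach $+c/2$; combined with the fact that at least one such interval is non-degenerate (guaranteed by $T_{U,0}^c<\infty$), a nonzero shift $\beta$ would push $\|f-g\|_\infty=\|f-f^c-\beta\|_\infty$ strictly above $c/2$. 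Making this ``both extremes are (nearly) attained'' observation precise — i.e.\ that $\inf_s (f(s)-f^c(s))=-c/2$ and $\sup_s(f(s)-f^c(s))=c/2$, or at worst that one cannot shift in either direction — is the one point that needs a short argument rather than a citation.

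The main obstacle I anticipate is precisely that last step: one has to check that the oscillation of $f-f^c$ truly fills the whole interval $[-c/2,c/2]$ (up to the half-open issue at the endpoints), which requires looking back at the definition of $f^c$ on each block and at the definitions of $m_k^c,M_k^c$, and verifying that the defining suprema/infima in $T_{U,k}^c,T_{D,k}^c$ are such that the bound $c/2$ is saturated from both sides as soon as $c\le\sup_{s,u}|f(s)-f(u)|$. The rest — passing from the two displayed inequalities to $g_U=f_U^c$, $g_D=f_D^c$ via Theorem \ref{thm1}, and concluding $g=f^c+\beta$ — is routine bookkeeping with minimal decompositions of functions of finite variation.
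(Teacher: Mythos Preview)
Your proposal is correct and follows exactly the paper's route: use Theorem~\ref{thm1} on the minimal decomposition of $g$ to force $g_U=f_U^c$ and $g_D=f_D^c$, hence $g=f^c+\beta$, and then argue $\beta=0$ from the fact that $f-f^c$ saturates both endpoints $\pm c/2$. The paper carries out your ``one point that needs a short argument'' very concretely by looking only at $[a;T_{U,0}^c)$ (where $f^c-f$ approaches $+c/2$ since $f^c\equiv m_0^c+c/2$ there) and at the single point $T_{U,0}^c$ (where $f^c-f=-c/2$ because $M_0^c(T_{U,0}^c)=f(T_{U,0}^c)$); note incidentally that your signs are flipped---on $[T_{U,k}^c;T_{D,k}^c)$ one has $f-f^c\in(-c/2;c/2]$, with $f-f^c=+c/2$ when $f(s)=M_k^c(s)$.
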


\begin{proof}
Let $g_{U},g_{D}:\left[ a;b\right] \rightarrow \left[ 0;+\infty \right) $ be
two non-decreasing functions such that for $s\in \left[ a;b\right] ,$ 
$g\left( s\right) =g\left( a\right) +g_{U}\left( s\right) -g_{D}\left(
s\right) $ and $TV\left( g, \left[ a;s\right]\right)  =g_{U}\left( s\right)
+g_{D}\left( s\right) .$ 

The first assertion follows directly from Theorem \ref{thm1} and the fact that $TV\left( g, \left[ a;s\right]\right)  =g_{U}\left( s\right)
+g_{D}\left( s\right) .$ 

The opposite inequality holds for every $s\in \left[ a;b\right] $\ iff 
$g_{U}\left( s\right) =f_{U}^{c}\left( s\right) $ and $g_{D}\left( s\right)
=f_{D}^{c}\left( s\right) .$ Thus in such a case we get $g\left( s\right)
-f^{c}\left( s\right) =g\left( a\right) -f^{c}\left( a\right) $ and we have 
\begin{eqnarray}
c/2 &\geq &\inf_{s\in \left[ a;T_{U,0}^{c}\right) }\left\{ g\left( s\right)
-f\left( s\right) \right\} =\inf_{s\in \left[ a;T_{U,0}^{c}\right) }\left\{
g\left( a\right) -f^{c}\left( a\right) +f^{c}\left( s\right) -f\left(
s\right) \right\}   \notag \\
&=&g\left( a\right) -f^{c}\left( a\right) +c/2  \label{g1}
\end{eqnarray}
(notice that  $T_{U,0}^{c} \leq b$ since $c \leq \sup_{s,u \in [a;b]} | f(s) - f(u) |$ and $T_{U,0}^{c} \leq T_{D,0}^{c}$).
On the other hand we have 
\begin{eqnarray}
-c/2 &\leq &g\left( T_{U,0}^{c}\right) -f\left( T_{U,0}^{c}\right) =g\left(
a\right) -f^{c}\left( a\right) +f^{c}\left( T_{U,0}^{c}\right) -f\left(
T_{U,0}^{c}\right)   \notag \\
&=&g\left( a\right) -f^{c}\left( a\right) -c/2.  \label{g2}
\end{eqnarray}
From (\ref{g1}) and (\ref{g2}) we get $g\left( a\right) =f^{c}\left(
a\right) .$ This together with the equalities $g_{U}\left( s\right)
=f_{U}^{c}\left( s\right) $ and $g_{D}\left( s\right) =f_{D}^{c}\left(
s\right) $ gives $g=f^{c}.$
\end{proof}

The formula obtained for the smallest possible total variation of a function
from the ball $\left\{ g:\left\| f-g\right\| \leq c/2\right\} $ reads as 
\begin{equation*}
f_{U}^{c}\left( b\right) +f_{D}^{c}\left( b\right)
\end{equation*}
and does not resemble formula (\ref{tv2c}). In subsection \ref
{relationutvdtv} we will show that these formulas coincide.

\subsection{Solution of the second problem}

In this subsection we will solve the following problem: \emph{\ for a
c\`{a}dl\`{a}g function $f:\left[ a;b\right] \rightarrow \mathbb{R}$ and $c>0
$ find 
\begin{equation*}
\inf \left\{ TV\left( f+h,\left[ a;b\right] \right) :\left\| h\right\|
_{osc}\leq c\right\} ,
\end{equation*}
where $\left\| h\right\| _{osc}:=\sup_{s,u\in \left[ a;b\right] }\left|
h\left( s\right) -h\left( u\right) \right| .$}

We will show that 
\begin{equation*}
\inf \left\{ TV\left( f+h,\left[ a;b\right] \right) :\left\| h\right\|
_{osc}\leq c\right\} =f_{U}^{c}\left( b\right) +f_{D}^{c}\left( b\right) ,
\end{equation*}
where $f_{U}^{c}$ and $f_{D}^{c}$ were defined in the previous subsection.
In order to do it let us simply define 
\begin{equation*}
f^{i,c}=f_{U}^{c}-f_{D}^{c}.
\end{equation*}
We have

\begin{lem}
\label{lem2} The increments of the function $f^{i,c}$ uniformly approximate the
increments of the function $f$ with accuracy $c$ and the function $f^{i,c}$ has
finite total variation.
\end{lem}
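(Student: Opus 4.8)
The plan is to read off the lemma from Lemma~\ref{lem1}, using that $f^{i,c}$ and $f^{c}$ differ only by the additive constant $f^{c}(a)$. Indeed, the minimal decomposition (\ref{decomp}) gives $f^{c}(s)=f^{c}(a)+f_{U}^{c}(s)-f_{D}^{c}(s)=f^{c}(a)+f^{i,c}(s)$ for every $s\in[a;b]$; moreover $f^{i,c}(a)=f_{U}^{c}(a)-f_{D}^{c}(a)=0$, because $c>0$ forces $T_{U,0}^{c}>a$, so that $a$ lies in the interval $[a;T_{U,0}^{c})$ on which $f_{U}^{c}$ and $f_{D}^{c}$ were set equal to $0$. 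Hence $f^{i,c}=f^{c}-f^{c}(a)$.

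Granting this, the total variation claim is immediate: adding a constant does not change increments, so $TV(f^{i,c},[a;b])=TV(f^{c},[a;b])$, which is finite by Lemma~\ref{lem1}. For the statement on increments, write $h:=f^{i,c}-f=f^{c}-f-f^{c}(a)$. For any $s,u\in[a;b]$ the constant cancels in the difference of increments:
\begin{equation*}
\bigl(f^{i,c}(s)-f^{i,c}(u)\bigr)-\bigl(f(s)-f(u)\bigr)=\bigl(f^{c}(s)-f(s)\bigr)-\bigl(f^{c}(u)-f(u)\bigr),
\end{equation*}
and since $\|f-f^{c}\|_{\infty}\le c/2$ by Lemma~\ref{lem1}, the right-hand side has absolute value at most $c/2+c/2=c$. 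Thus $\|h\|_{osc}\le c$, i.e.\ the increments of $f^{i,c}$ approximate those of $f$ with accuracy $c$; equivalently, one may simply note $\|h\|_{osc}=\|f^{c}-f\|_{osc}\le 2\|f^{c}-f\|_{\infty}\le c$.

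I do not anticipate any genuine obstacle: the substance of the lemma is already contained in Lemma~\ref{lem1}, and the only point needing attention is the elementary bookkeeping that $f^{i,c}$ and $f^{c}$ differ by a constant, so that the uniform estimate $\|f-f^{c}\|_{\infty}\le c/2$ passes to the oscillation estimate $\|f^{i,c}-f\|_{osc}\le c$ with no loss. One could alternatively argue directly from the explicit formulas defining $f_{U}^{c}$ and $f_{D}^{c}$, but this would essentially reprove a special case of Lemma~\ref{lem1}.
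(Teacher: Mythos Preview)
Your proof is correct and follows essentially the same route as the paper's own argument: both observe that $f^{i,c}$ and $f^{c}$ differ by the constant $f^{c}(a)$ (the paper simply states ``the difference $f^{c}-f^{i,c}$ is constant''), then invoke Lemma~\ref{lem1} and the identical telescoping calculation $(f^{i,c}(s)-f^{i,c}(u))-(f(s)-f(u))=(f^{c}(s)-f(s))-(f^{c}(u)-f(u))\in[-c,c]$. Your extra remark that $f^{i,c}(a)=0$ via $T_{U,0}^{c}>a$ is harmless but not needed, since the decomposition (\ref{decomp}) already gives $f^{i,c}(a)=f^{c}(a)-f^{c}(a)=0$ directly.
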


\begin{proof}
Since the difference $f^c - f^{i,c}$ is constant, the first and the second assertion follows immediately from 
Lemma \ref{lem1} and from simple calculation that for any $s,u \in [a;b],$
\begin{eqnarray*}
\lefteqn{ \left\{ f^{i,c}\left( s\right) -f^{i,c}\left( u\right) \right\}- \left\{ f\left( s\right) -f\left( u\right) \right\} }\\
& = &\left\{ f^{c}\left( s\right) -f\left( s\right) \right\} -\left\{
f^{c}\left( u\right) -f\left( u\right) \right\} \in [-c;c].
\end{eqnarray*}
\end{proof}

Now we will prove the analog of Theorem \ref{thm1}.

\begin{thm}
\label{thm2} If the increments of the function $g:\left[ a;b\right] \rightarrow 
\mathbb{R}$ uniformly approximate the increments of the function $f$ with
accuracy $c,$ $g$ has finite total variation and $g_{U},g_{D}:\left[
a;b\right] \rightarrow \left[ 0;+\infty \right) $ are such two
non-decreasing functions that $g\left( t\right) =g\left( a\right)
+g_{U}\left( t\right) -g_{D}\left( t\right) ,t\in \left[ a;b\right] ,$ then
for any $s\in \left[ a;b\right] $%
\begin{equation*}
g_{U}\left( s\right) \geq f_{U}^{c}\left( s\right) \text{ and }g_{D}\left(
s\right) \geq f_{D}^{c}\left( s\right) .
\end{equation*}
\end{thm}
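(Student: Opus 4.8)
The plan is to reduce Theorem~\ref{thm2} to Theorem~\ref{thm1} by a shifting argument. The key observation is that the hypothesis ``the increments of $g$ uniformly approximate the increments of $f$ with accuracy $c$'' means precisely that $g-f$ has oscillation at most $c$, i.e. $\left\|g-f\right\|_{osc}\leq c$. By the elementary fact already used in the introduction, there is a unique real $\alpha$ such that the shifted function $\tilde g := g+\alpha$ satisfies $\left\|\tilde g - f\right\|_{\infty} = \tfrac12\left\|g-f\right\|_{osc}\leq \tfrac{c}{2}$; explicitly one takes $\alpha = -\inf_{s}(g(s)-f(s)) - \tfrac12\left\|g-f\right\|_{osc}$. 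This shifted function $\tilde g$ then uniformly approximates $f$ with accuracy $c/2$ and still has finite total variation.

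Next I would apply Theorem~\ref{thm1} to $\tilde g$. Since shifting by the constant $\alpha$ does not change the increments, if $g(t)=g(a)+g_U(t)-g_D(t)$ is a decomposition of $g$ into differences of non-decreasing functions, then $\tilde g(t)=\tilde g(a)+g_U(t)-g_D(t)$ is such a decomposition of $\tilde g$ with the \emph{same} $g_U$ and $g_D$. Theorem~\ref{thm1} applied to $\tilde g$ therefore yields $g_U(s)\geq f_U^c(s)$ and $g_D(s)\geq f_D^c(s)$ for every $s\in[a;b]$, which is exactly the assertion. In the case $c > \sup_{s,u\in[a;b]}|f(s)-f(u)|$ the right-hand sides $f_U^c$ and $f_D^c$ are identically zero (the stopping time $T_{U,0}^c$ lies beyond $b$), so the inequalities are trivial; thus one may freely assume $c\leq\sup_{s,u}|f(s)-f(u)|$, which is the regime in which Theorem~\ref{thm1} and its accompanying remarks were stated.

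I do not expect a serious obstacle here; the only point requiring a little care is the bookkeeping that the decomposition functions $g_U, g_D$ are genuinely unchanged by the constant shift, so that the quantity appearing in the conclusion of Theorem~\ref{thm1} is literally the same object as the one in the statement of Theorem~\ref{thm2}. One should also note that the hypothesis of Theorem~\ref{thm1} requires $\left\|f-\tilde g\right\|_\infty\leq c/2$ with the same convention $T_D^c f\geq T_U^c f$ (or else pass to $-f$), but as remarked in the excerpt all the results of the section apply to arbitrary c\`adl\`ag $f$ after the obvious modifications, so no loss of generality occurs. The whole proof is thus essentially one paragraph: identify $\left\|g-f\right\|_{osc}\leq c$, recentre to get an $L^\infty$-approximation of accuracy $c/2$, and invoke Theorem~\ref{thm1}.
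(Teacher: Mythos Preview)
Your proposal is correct and is essentially identical to the paper's own proof: the paper also sets $h=g-f$, takes $\alpha=-\tfrac12\{\inf_s h(s)+\sup_s h(s)\}$ (which coincides with your formula), observes that $g_\alpha=\alpha+g$ lies in the ball $\{\,g:\|f-g\|_\infty\le c/2\,\}$, and then invokes Theorem~\ref{thm1}. Your additional remarks about the decomposition $g_U,g_D$ being unaffected by the constant shift and about the trivial case $c>\sup_{s,u}|f(s)-f(u)|$ are correct and merely make explicit what the paper leaves implicit.
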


\begin{proof}
It is enough to see that for $h = g - f,$ $\left\| h\right\|
_{osc}\leq c,$ thus for $${\alpha} = - \frac{1}{2} \left\{ \inf_{s \in [a;b] }h(s) + \sup_{s \in [a;b] }h(s) \right\},$$ $\left\| {\alpha} + h\right\|
_{\infty}\leq \tfrac{1}{2}c,$ and the function $g_{{\alpha}} = {\alpha} + g$ belongs to the ball $\left\{ g:\left\| f-g\right\| _{\infty }\leq \tfrac{1}{2}c\right\}.$ Application of Theorem \ref{thm1} to the function $g_{{\alpha}}$ concludes the proof.

\end{proof}

Since the decomposition $f^{i,c}\left( s\right) =$ $f_{U}^{c}\left( s\right)
-f_{D}^{c}\left( s\right) $ is minimal and $f^{i,c}\left( a\right) = 0$ we
immediately obtain

\begin{corollary}
\label{cor2} The function $f^{i,c}$ is optimal i.e. if $g:\left[ a;b\right]
\rightarrow \mathbb{R}$ is such that 
\begin{equation*}
\sup_{a\leq u<s\leq b}\left| \left\{ g\left( s\right) -g\left( u\right)
\right\} -\left\{ f\left( s\right) -f\left( u\right) \right\} \right| \leq c
\end{equation*}
and $g$ has finite total variation, then for every $s\in \left[ a;b\right] $ 
\begin{equation*}
TV\left( g,\left[ a;s\right] \right) \geq TV\left( f^{i,c},\left[ a;s\right]
\right) .
\end{equation*}
Moreover, it is unique in such a sense that if $g\left( a\right) =0$ and\
for every $s\in \left[ a;b\right] $ the opposite inequality holds 
\begin{equation*}
TV\left( g,\left[ a;s\right] \right) \leq TV\left( f^{i,c},\left[ a;s\right]
\right) ,
\end{equation*}
then $g=f^{i,c}.$
\end{corollary}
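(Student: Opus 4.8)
The plan is to reduce everything to the already-established Theorem \ref{thm2}, exactly as the proof of Theorem \ref{thm2} reduced to Theorem \ref{thm1}. The first assertion is essentially immediate: given $g$ whose increments uniformly approximate those of $f$ with accuracy $c$ and which has finite total variation, write $g(t) = g(a) + g_U(t) - g_D(t)$ for the minimal (Hahn--Jordan) decomposition, so that $TV(g,[a;s]) = g_U(s) + g_D(s)$ for every $s$ (cf.\ \cite{Revuz:1991kx}, p.\ 5). Theorem \ref{thm2} gives $g_U(s) \ge f_U^c(s)$ and $g_D(s) \ge f_D^c(s)$, hence $TV(g,[a;s]) \ge f_U^c(s) + f_D^c(s) = TV(f^{i,c},[a;s])$, the last equality because $f^{i,c} = f_U^c - f_D^c$ with $f^{i,c}(a) = 0$ is itself the minimal decomposition (this minimality follows from Theorem \ref{thm1}/\ref{thm2} applied to $g = f^{i,c}$ itself, whose increments approximate those of $f$ with accuracy $c$ by Lemma \ref{lem2}). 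Note also that $f^{i,c}$ indeed has finite total variation and is c\`adl\`ag by Lemma \ref{lem2} and Remark \ref{fUfDcadlag}, so it is a legitimate competitor.

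For the uniqueness part, suppose $g(a) = 0$, $g$ has finite total variation, its increments approximate those of $f$ with accuracy $c$, and $TV(g,[a;s]) \le TV(f^{i,c},[a;s])$ for every $s \in [a;b]$. Combining with the first assertion, $TV(g,[a;s]) = TV(f^{i,c},[a;s]) = f_U^c(s) + f_D^c(s)$ for all $s$. Since $g_U(s) + g_D(s) = TV(g,[a;s]) = f_U^c(s) + f_D^c(s)$ while $g_U(s) \ge f_U^c(s)$ and $g_D(s) \ge f_D^c(s)$ componentwise by Theorem \ref{thm2}, we are forced to have $g_U(s) = f_U^c(s)$ and $g_D(s) = f_D^c(s)$ for every $s$. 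Therefore $g(s) = g(a) + g_U(s) - g_D(s) = 0 + f_U^c(s) - f_D^c(s) = f^{i,c}(s)$, which is the claim.

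The only point requiring a little care — and the step I expect to be the main (minor) obstacle — is confirming that $f^{i,c}$ itself satisfies the hypotheses of the corollary, i.e.\ that its increments approximate those of $f$ with accuracy $c$: this is precisely Lemma \ref{lem2}, and it is what makes the equality $TV(f^{i,c},[a;s]) = f_U^c(s) + f_D^c(s)$ available (via minimality of the decomposition $f^{i,c} = f_U^c - f_D^c$, which in turn rests on Theorem \ref{thm1} exactly as noted after equation (\ref{decomp})). Everything else is a formal consequence of the additivity $TV(g,[a;s]) = g_U(s) + g_D(s)$ for minimal decompositions together with the componentwise bound from Theorem \ref{thm2}; in particular, unlike Corollary \ref{cor1}, no separate argument pinning down the starting value is needed here, since the normalisation $g(a) = 0 = f^{i,c}(a)$ is imposed directly as a hypothesis.
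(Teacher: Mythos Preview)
Your proposal is correct and follows exactly the approach the paper takes: the paper simply states that the corollary is immediate from Theorem~\ref{thm2} together with the minimality of the decomposition $f^{i,c}=f_U^c-f_D^c$ and the normalisation $f^{i,c}(a)=0$, and you have accurately spelled out what ``immediate'' means here. In particular, your observation that the uniqueness part needs no separate starting-value argument (in contrast to Corollary~\ref{cor1}) is precisely the reason the paper can dispense with a written proof.
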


From Corollary \ref{cor2} it immediately follows that 
\begin{equation*}
\inf \left\{ TV\left( f+h,\left[ a;b\right] \right) :\left\| h\right\|
_{osc}\leq c\right\} =f_{U}^{c}\left( b\right) +f_{D}^{c}\left( b\right).
\end{equation*}
Indeed, for any $h$ such that $\left\| h\right\| _{osc} \leq c$ we put $g =
f + h$ and if $g$ has finite total variation then it satisfies the
assumptions of Corollary \ref{cor2} and we get 
\begin{equation*}
TV\left( g,\left[ a;b\right] \right) \geq TV\left( f^{i,c},\left[ a;b\right]
\right) = f_{U}^{c}\left( b\right) +f_{D}^{c}\left( b\right) .
\end{equation*}

\subsection{Relation of the solutions of the first and the second problem
with truncated variation, upward truncated variation and downward truncated
variation}

\label{relationutvdtv}

In order to prove (\ref{tv2c}), (\ref{tvceq}) and (\ref{sumutvdtv}), where $%
UTV^{c}\left( f,\left[ a;s\right] \right) $ and $DTV^{c}\left( f,\left[ a;s%
\right] \right) $ are defined by (\ref{utv:def}) and (\ref{dtv:def})
respectively,\ it is enough to prove

\begin{thm}
\label{THMM} For a given c\`{a}dl\`{a}g function $f:\left[ a;b\right]
\rightarrow \mathbb{R}$ and for any $s\in \left( a;b\right] $\ the following
equalities hold 
\begin{gather}
UTV^{c}\left( f,\left[ a;s\right] \right) =f_{U}^{c}\left( s\right) ,
\label{UTVfU} \\
DTV^{c}\left( f,\left[ a;s\right] \right) =f_{D}^{c}\left( s\right) ,
\label{DTVfD} \\
TV^{c}\left( f,\left[ a;s\right] \right) =f_{U}^{c}\left( s\right)
+f_{D}^{c}\left( s\right) .  \label{eq3}
\end{gather}
\end{thm}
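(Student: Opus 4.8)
The three identities will follow once we prove, for every $s\in(a;b]$, the two upper bounds $UTV^{c}(f,[a;s])\le f_{U}^{c}(s)$ and $DTV^{c}(f,[a;s])\le f_{D}^{c}(s)$ together with the three matching lower bounds $UTV^{c}(f,[a;s])\ge f_{U}^{c}(s)$, $DTV^{c}(f,[a;s])\ge f_{D}^{c}(s)$ and $TV^{c}(f,[a;s])\ge f_{U}^{c}(s)+f_{D}^{c}(s)$. Indeed, since for $c>0$ and any real $x$ one has $\max\{|x|-c,0\}=\max\{x-c,0\}+\max\{-x-c,0\}$, summing over an arbitrary partition gives $TV^{c}(f,[a;s])\le UTV^{c}(f,[a;s])+DTV^{c}(f,[a;s])$, so the two upper bounds also yield $TV^{c}(f,[a;s])\le f_{U}^{c}(s)+f_{D}^{c}(s)$, and the listed lower bounds then force equality everywhere. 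As usual we argue under the standing assumption $T_{D}^{c}f\ge T_{U}^{c}f$; the opposite case follows by applying the result to $-f$, which interchanges $UTV^{c}$ with $DTV^{c}$ and $f_{U}^{c}$ with $f_{D}^{c}$.

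For the upper bounds, fix a partition $a\le t_{0}<t_{1}<\dots<t_{n}\le s$. By Lemma \ref{lem2}, $\{f^{i,c}(t_{j})-f^{i,c}(t_{j-1})\}-\{f(t_{j})-f(t_{j-1})\}\in[-c;c]$, and since $f^{i,c}=f_{U}^{c}-f_{D}^{c}$ with $f_{D}^{c}$ non-decreasing this gives $f(t_{j})-f(t_{j-1})-c\le f_{U}^{c}(t_{j})-f_{U}^{c}(t_{j-1})$, a non-negative quantity; hence $\max\{f(t_{j})-f(t_{j-1})-c,0\}\le f_{U}^{c}(t_{j})-f_{U}^{c}(t_{j-1})$. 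Summing over $j$ and using $f_{U}^{c}(a)=0$ and the monotonicity of $f_{U}^{c}$ yields $\sum_{j}\max\{f(t_{j})-f(t_{j-1})-c,0\}\le f_{U}^{c}(t_{n})\le f_{U}^{c}(s)$, so $UTV^{c}(f,[a;s])\le f_{U}^{c}(s)$; the bound $DTV^{c}(f,[a;s])\le f_{D}^{c}(s)$ is obtained symmetrically.

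The lower bounds are the heart of the argument and come from one carefully chosen partition. If $s\in[a;T_{U,0}^{c})$ then $s<T_{U}^{c}f$ and $s<T_{D}^{c}f$, so $|f(v)-f(u)|<c$ for all $a\le u<v\le s$ and all three functionals vanish on $[a;s]$, matching $f_{U}^{c}(s)=f_{D}^{c}(s)=0$. Otherwise $s$ lies in some $[T_{U,k}^{c};T_{D,k}^{c})$ or $[T_{D,k}^{c};T_{U,k+1}^{c})$. Fix $\varepsilon\in(0;c/2)$ and choose points $u_{i}\in[T_{D,i-1}^{c};T_{U,i}^{c})\cap[a;b]$ with $f(u_{i})<m_{i}^{c}+\varepsilon$ and $v_{i}\in[T_{U,i}^{c};T_{D,i}^{c})$ with $f(v_{i})>M_{i}^{c}-\varepsilon$, taking in the first case $v_{k}\in[T_{U,k}^{c};s]$ with $f(v_{k})>M_{k}^{c}(s)-\varepsilon$ and in the second case one further point $u_{k+1}\in[T_{D,k}^{c};s]$ with $f(u_{k+1})<m_{k+1}^{c}(s)+\varepsilon$. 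Because the intervals involved are consecutive and non-degenerate, these points form an increasing partition $u_{0}<v_{0}<u_{1}<v_{1}<\cdots$ of $[a;s]$. The definitions of the stopping times give $f(T_{U,i}^{c})-m_{i}^{c}\ge c$ and $M_{i}^{c}-f(T_{D,i}^{c})\ge c$, hence $M_{i}^{c}-m_{i}^{c}\ge c$, $M_{i}^{c}-m_{i+1}^{c}\ge c$, $M_{k}^{c}(s)-m_{k}^{c}\ge c$ and $M_{k}^{c}-m_{k+1}^{c}(s)\ge c$; using these and $\varepsilon<c/2$ one checks that on this partition every ascending increment $u_{i}\to v_{i}$ has $\max\{f(v_{i})-f(u_{i})-c,0\}\ge(M_{i}^{c}-m_{i}^{c}-c)-2\varepsilon$ and contributes $0$ to the downward sum, while every descending increment $v_{i}\to u_{i+1}$ satisfies $f(u_{i+1})-f(v_{i})<0$, has $\max\{f(v_{i})-f(u_{i+1})-c,0\}\ge(M_{i}^{c}-m_{i+1}^{c}-c)-2\varepsilon$, and contributes $0$ to the upward sum (with $M_{k}^{c}(s)$, resp. $m_{k+1}^{c}(s)$, replacing $M_{k}^{c}$, resp. $m_{k+1}^{c}$, at the endpoint increment). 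Comparing with the explicit formulas for $f_{U}^{c}(s)$ and $f_{D}^{c}(s)$, which are exactly the sums of the terms $M_{i}^{c}-m_{i}^{c}-c$ and $M_{i}^{c}-m_{i+1}^{c}-c$ respectively, the upward sum over this partition is $\ge f_{U}^{c}(s)-C\varepsilon$, the downward sum is $\ge f_{D}^{c}(s)-C\varepsilon$, and the $TV^{c}$-sum is $\ge f_{U}^{c}(s)+f_{D}^{c}(s)-C\varepsilon$, where $C<\infty$ depends only on $k$. Letting $\varepsilon\downarrow0$ gives the three lower bounds, and combining the two halves proves \eqref{UTVfU}, \eqref{DTVfD} and \eqref{eq3}.

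I expect the only real obstacle to be the bookkeeping in the lower-bound step: one must confirm that the truncations $M_{i}^{c}-m_{i}^{c}-c$ and $M_{i}^{c}-m_{i+1}^{c}-c$ are non-negative (so that the elementary zig-zag partition really reproduces the values delivered by the formulas for $f_{U}^{c},f_{D}^{c}$), treat separately the two possible positions of $s$ and the corresponding endpoint corrections involving $M_{k}^{c}(s)$ and $m_{k+1}^{c}(s)$, and track which increments of the zig-zag are ascending and which descending so that the upward, downward and total sums are each bounded below correctly; the infima $m_{i}^{c}$ and suprema $M_{i}^{c}$ need not be attained, but since the partition is finite this only costs the harmless additive errors $\varepsilon$.
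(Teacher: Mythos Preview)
Your argument is correct and is a genuinely different route from the paper's.

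The paper establishes \eqref{UTVfU} by invoking an external recursion (Lemma~3 of \cite{Lochowski:2011}), namely
\[
UTV^{c}(f,[a;s])=\sup_{a\le t<u\le (T_{D}^{c}f)\wedge s}(f(u)-f(t)-c)_{+}+UTV^{c}(f,[(T_{D}^{c}f)\wedge s;s]),
\]
and then iterates this identity through the stopping times $T_{U,k}^{c},T_{D,k}^{c}$ to read off the explicit formula for $f_{U}^{c}(s)$; \eqref{DTVfD} is handled symmetrically, and for \eqref{eq3} the paper exhibits the same zig-zag partition you use for the lower bound and notes, as you do, that $TV^{c}\le UTV^{c}+DTV^{c}$ is obvious.

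Your proof replaces the external recursion entirely. For the upper bounds you observe that Lemma~\ref{lem2} (already available at this point) gives $f(t_{j})-f(t_{j-1})-c\le f_{U}^{c}(t_{j})-f_{U}^{c}(t_{j-1})$, which after summing bounds every partition sum by $f_{U}^{c}(s)$; this is a clean internal argument the paper does not use. For the lower bounds you carry out explicitly the zig-zag $u_{0}<v_{0}<u_{1}<\cdots$ with $\varepsilon$-approximate extremes, checking the sign pattern of the increments via $M_{i}^{c}-m_{i}^{c}\ge c$ and $M_{i}^{c}-m_{i+1}^{c}\ge c$. The paper's lower-bound step for \eqref{eq3} is the same idea, just stated more tersely.

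What each approach buys: the paper's recursion gives the result for $UTV^{c}$ in one stroke once the cited lemma is granted, but depends on that outside reference; your argument is self-contained within the present paper's Section~2 and makes transparent why $f_{U}^{c}$ and $f_{D}^{c}$ are simultaneously upper and lower bounds, at the cost of the explicit $\varepsilon$-bookkeeping you flag in your final paragraph. Both are sound; yours is the more elementary.
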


\begin{proof}
Examining (with obvious modifications) the proof of Lemma 3 from \cite{Lochowski:2011}, we see that it may be applied to the c\`{a}dl\`{a}g (but not necessarily continuous)
function $f$\ and we obtain 
\begin{equation}
UTV^{c}\left( f, \left[ a;s\right] \right) =\sup_{a\leq t<u\leq \left(
T_{D}^{c}f\right) \wedge s}\left( f\left( u\right) -f\left( t\right)
-c\right) _{+}+UTV^{c}\left( f, \left[ \left( T_{D}^{c}f\right) \wedge
s;s\right] \right) .  \label{utvf}
\end{equation}
Now, from the assumption $T_{D}^{c}f\geq T_{U}^{c}f$ we get 
$T_{D}^{c}f=T_{D,0}^{c}$ and we have that 
\begin{equation*}
\sup_{a\leq t<u\leq \left( T_{D}^{c}f\right) \wedge s}\left( f\left(
u\right) -f\left( t\right) -c\right) _{+}=\left\{ 
\begin{array}{lr}
0 & \text{ if }s\in \left[ a;T_{U,0}^{c}\right) ; \\ 
M_{0}^{c}\left( s\right) -m_{0}^{c}-c & \text{ if }s\in \left[
T_{U,0}^{c};T_{D,0}^{c}\right) ; \\ 
M_{0}^{c}-m_{0}^{c}-c & \text{ if }s\geq T_{D,0}^{c}.
\end{array}
\right. 
\end{equation*}
Iterating the equality (\ref{utvf}) we obtain 
\begin{eqnarray*}
UTV^{c}\left( f, \left[ a;s\right]  \right) &=&\left\{ 
\begin{array}{lr}
0 & \text{ if }s\in \left[ a;T_{U,0}^{c}\right) ; \\ 
\sum_{i=0}^{k-1}\left( M_{i}^{c}-m_{i}^{c}-c\right) +M_{k}^{c}\left(
s\right) -m_{k}^{c}-c & \text{ if }s\in \left[ T_{U,k}^{c};T_{D,k}^{c}\right) ;
\\ 
\sum_{i=0}^{k}\left( M_{i}^{c}-m_{i}^{c}-c\right) & \text{ if }s\in \left[
T_{D,k}^{c};T_{U,k+1}^{c}\right) 
\end{array}
\right.  \\
&=&f_{U}^{c}\left( s\right) .
\end{eqnarray*}

\begin{rem}
Let us define sequence of stopping times $\tilde{T}_{D,0}^{c}=0,$ and for $%
k=0,1,2,...$%
\begin{equation*}
\tilde{T}_{D,k+1}^{c}=\inf \left\{ s>\tilde{T}_{D,k}^{c}:\sup_{t\in \left[ 
\tilde{T}_{D,k}^{c};s\right] }f\left( t\right) -f\left( s\right) =c\right\} .
\end{equation*}
\ Let us fix $s_{0}$ and define $k_{0}=\max \left\{ k:\tilde{T}%
_{D,k}^{c}\leq s_{0}\right\} .$ The immediate consequence of (\ref{utvf}) is the equality 
\begin{equation*}
UTV^{c}\left( f,  \left[ a;s_{0}\right]\right) =\sum_{k=1}^{k_{0}-1}\sup_{%
\tilde{T}_{D,k}^{c}\leq s<u\leq \tilde{T}_{D,k+1}^{c}}\left( f\left(
u\right) -f\left( s\right) -c\right) _{+}+UTV^{c}\left( f, \left[ 
\tilde{T}_{D,k_{0}}^{c};s_{0}\right]\right)  
\end{equation*}
which looks different from $f_{U}^{c}\left( s_0\right) .$ But it
is easy to notice that for all $k\geq 1$ such that $\tilde{T}%
_{D,k+1}^{c}<T_{U,1}^{c}f$ the summand $\sup_{\tilde{T}_{D,k}^{c}\leq s<u\leq 
\tilde{T}_{D,k+1}^{c}}\left( f\left( u\right) -f\left( s\right) -c\right)
_{+}$ is equal zero. Thus in fact both quantities coincide.
\end{rem}

Identically we prove that $DTV^{c}\left( f\right) \left[ a;s\right]
=f_{D}^{c}\left( s\right) .$

Now, in order to prove the equality (\ref{eq3}) simply notice that 
$TV^{c}\left( f, \left[ a;s\right]\right)  \geq 0$\ and if $s\in \left[
T_{U,k}^{c};T_{D,k}^{c}\right) $ 
\begin{eqnarray*}
TV^{c}\left( f, \left[ a;s\right]\right)   &\geq &\sum_{i=0}^{k-1}\left(
M_{i}^{c}-m_{i}^{c}-c\right) +\sum_{i=0}^{k-1}\left(
M_{i}^{c}-m_{i+1}^{c}-c\right) +M_{k}^{c}\left( s\right) -m_{k}^{c}-c \\
&=&f_{U}^{c}\left( s\right) +f_{D}^{c}\left( s\right) .
\end{eqnarray*}
Analogously, if $s\in \left[ T_{D,k}^{c};T_{U,k+1}^{c}\right) $ 
\begin{eqnarray*}
TV^{c}\left( f, \left[ a;s\right]\right)   &\geq &\sum_{i=0}^{k-1}\left(
M_{i}^{c}-m_{i}^{c}-c\right) +\sum_{i=0}^{k-1}\left(
M_{i}^{c}-m_{i+1}^{c}-c\right) +M_{k}^{c} -m_{k+1}^{c}\left(
s\right) -c \\
&=&f_{U}^{c}\left( s\right) +f_{D}^{c}\left( s\right) .
\end{eqnarray*}
Hence for all $s\in \left[ a;b\right] $ 
\begin{equation*}
TV^{c}\left( f, \left[ a;s\right]\right)  \geq f_{U}^{c}\left( s\right)
+f_{D}^{c}\left( s\right) .
\end{equation*}

So 
\begin{equation*}
TV^{c}\left( f, \left[ a;s\right]\right)  \geq UTV^{c}\left( f, \left[
a;s\right]\right)  +DTV^{c}\left( f, \left[ a;s\right]\right)  .
\end{equation*}
Since the opposite inequality is obvious, we finally get (\ref{eq3}).
\end{proof}

Now we see that by Corollary \ref{cor1} and Corollary \ref{cor2} functions $%
h^{c}=f^{c}-f$ and $h^{0,c}=f(a)+f^{i,c}-f= f(a) + UTV^{c}\left( f,\left[ a;.%
\right] \right) -DTV^{c}\left( f,\left[ a;.\right] \right) -f$ are optimal
and such that for any $s\in \left( a;b\right] $%
\begin{eqnarray*}
\inf \left\{ TV\left( f+h,\left[ a;s\right] \right) :\left\| h\right\|
_{\infty }\leq \tfrac{1}{2}c\right\} &=&TV\left( f+h^{c},\left[ a;s\right]
\right) \\
&=&TV^{c}\left( f,\left[ a;s\right] \right),
\end{eqnarray*}
\begin{eqnarray*}
\inf \left\{ TV\left( f+h,\left[ a;s\right] \right) :\left\| h\right\|
_{osc}\leq c\right\} &=&TV\left( f+h^{0,c},\left[ a;s\right] \right) \\
&=&TV^{c}\left( f,\left[ a;s\right] \right) .
\end{eqnarray*}
Moreover, by Remark \ref{fUfDcadlag}, $h^{c}$ and $h^{0,c}$ are also
c\`{a}dl\`{a}g functions and every point of their discontinuity is also a
point of discontinuity of the function $f.$

\subsection{Further properties of truncated variation, upward truncated
variation and downward truncated variation}

In this subsection we summarize basic properties of the defined functionals.
We start with

\subsubsection{Algebraic properties.}

For any $c>0$ we have 
\begin{gather}
DTV^{c}\left( f,\left[ a;b\right] \right) =UTV^{c}\left( -f,\left[ a;b\right]
\right) ,  \label{A1} \\
TV^{c}\left( f,\left[ a;b\right] \right) =UTV^{c}\left( f,\left[ a;b\right]
\right) +DTV^{c}\left( f,\left[ a;b\right] \right) .  \label{A2}
\end{gather}
Property (\ref{A1}) follows simply from the definitions (\ref{utv:def}) and (%
\ref{dtv:def}). Property (\ref{A2}) is the consequence of Theorem \ref{THMM}.

\subsubsection{Properties of $UTV^{c}\left( f,\left[ a;b\right] \right)
,DTV^{c}\left( f,\left[ a;b\right] \right) $ and $TV^{c}\left( f,\left[ a;b%
\right] \right) $ as the functions of the parameter $c.$}

We have the following

\begin{fact}
For any c\`{a}dl\`{a}g function $f$ the functions $\left( 0;\infty \right)
\ni c\mapsto UTV^{c}\left( f,\left[ a;b\right] \right) \in \left[ 0;+\infty
\right) ,$ $\left( 0;\infty \right) \ni c\mapsto DTV^{c}\left( f,\left[ a;b%
\right] \right) \in \left[ 0;+\infty \right) $ and $\left( 0;\infty \right)
\ni c\mapsto TV^{c}\left( f,\left[ a;b\right] \right) \in \left[ 0;+\infty
\right) $ are nonincreasing, continuous, convex functions of the parameter $%
c.$ Moreover, $\lim_{c\downarrow 0}TV^{c}\left( f,\left[ a;b\right] \right)
=TV\left( f,\left[ a;b\right] \right) $ and for any $c\geq \left\| f\right\|
_{osc},$ $TV^{c}\left( f,\left[ a;b\right] \right) =0.$
\end{fact}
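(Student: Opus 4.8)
The plan is to prove each claimed property separately, exploiting the representation obtained in Theorem~\ref{THMM} together with the defining formulas (\ref{utv:def}), (\ref{dtv:def}), (\ref{tv:def}) and the algebraic identity (\ref{A2}). Because of (\ref{A1}) and (\ref{A2}) it suffices to treat $c\mapsto UTV^{c}\left(f,\left[a;b\right]\right)$, since the statement for $DTV^{c}$ follows by replacing $f$ with $-f$ (note $\left\|-f\right\|_{osc}=\left\|f\right\|_{osc}$ and $TV\left(-f,\left[a;b\right]\right)=TV\left(f,\left[a;b\right]\right)$), and the statement for $TV^{c}$ then follows by adding the two, since sums of nonincreasing (resp. continuous, resp. convex) functions inherit the same property.

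First I would establish that $c\mapsto UTV^{c}\left(f,\left[a;b\right]\right)$ is nonincreasing and convex directly from the definition. For a fixed partition $a\le t_{0}<\dots<t_{n}\le b$ the map $c\mapsto\sum_{i=1}^{n}\max\left\{f\left(t_{i}\right)-f\left(t_{i-1}\right)-c,0\right\}$ is a finite sum of functions of the form $c\mapsto\left(x-c\right)_{+}$, each of which is nonincreasing and convex; hence the sum is nonincreasing and convex, and $UTV^{c}$, being the supremum over all partitions, is nonincreasing and convex as a supremum of nonincreasing convex functions. Nonnegativity is clear (take the trivial partition), and finiteness was already recorded in the introduction (and follows again from $\left(x-c\right)_{+}\le\left(x-c'\right)_{+}\le\dots$ together with the c\`{a}dl\`{a}g bound, or simply from Theorem~\ref{THMM}). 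The vanishing for large $c$ is immediate: if $c\ge\left\|f\right\|_{osc}$ then every increment $f\left(t_{i}\right)-f\left(t_{i-1}\right)$ has absolute value $\le c$, so every summand in (\ref{tv:def}) is $0$, whence $TV^{c}\left(f,\left[a;b\right]\right)=0$, and consequently $UTV^{c}=DTV^{c}=0$ as well.

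Next I would treat continuity. A real convex function on the open interval $\left(0;\infty\right)$ is automatically continuous there, so convexity already gives continuity on $\left(0;\infty\right)$ for free; this is the cleanest route and I would use it. (Alternatively one could prove it by hand from the Lipschitz-type bound $0\le UTV^{c}-UTV^{c'}\le N\left(c'-c\right)$ for $c<c'$, where $N$ is the number of ''relevant'' upward excursions, but invoking convexity avoids any such bookkeeping.) It remains to identify $\lim_{c\downarrow0}TV^{c}\left(f,\left[a;b\right]\right)=TV\left(f,\left[a;b\right]\right)$. Here one inequality is trivial: $TV^{c}\le TV$ for every $c>0$ since each summand is dominated, so $\limsup_{c\downarrow0}TV^{c}\le TV$. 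For the reverse, fix any partition $a\le t_{0}<\dots<t_{n}\le b$; then $\sum_{i=1}^{n}\max\left\{\left|f\left(t_{i}\right)-f\left(t_{i-1}\right)\right|-c,0\right\}\ge\sum_{i=1}^{n}\left|f\left(t_{i}\right)-f\left(t_{i-1}\right)\right|-nc\to\sum_{i=1}^{n}\left|f\left(t_{i}\right)-f\left(t_{i-1}\right)\right|$ as $c\downarrow0$, hence $\liminf_{c\downarrow0}TV^{c}\ge\sum_{i=1}^{n}\left|f\left(t_{i}\right)-f\left(t_{i-1}\right)\right|$ for every partition, and taking the supremum over partitions gives $\liminf_{c\downarrow0}TV^{c}\ge TV\left(f,\left[a;b\right]\right)$. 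Combined with monotonicity in $c$ this yields the stated limit (finite or $+\infty$).

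The main obstacle, such as it is, is purely organizational: one must be careful that the supremum-of-partitions definition genuinely transfers each pointwise property (monotonicity, convexity, the liminf bound) to $UTV^{c}$ and $TV^{c}$, and that the passage $c\downarrow0$ is handled with the correct order of quantifiers (fix the partition first, let $c\downarrow0$, then take the sup). No delicate estimate is needed anywhere; the convexity observation does the heavy lifting for continuity, and the rest is a short sequence of elementary inequalities.
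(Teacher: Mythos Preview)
Your proof is correct and follows essentially the same route as the paper: reduce to $UTV^{c}$ via (\ref{A1})--(\ref{A2}), obtain convexity from the pointwise convexity of $c\mapsto (x-c)_{+}$, and deduce continuity from convexity on the open interval. Your packaging via ``a supremum of nonincreasing convex functions is nonincreasing and convex'' is slightly slicker than the paper's explicit $\varepsilon$-near-optimal-partition argument (which is the same idea unwound), and you additionally supply the argument for $\lim_{c\downarrow 0}TV^{c}=TV$, which the paper's proof actually omits.
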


\begin{proof}
The finiteness of $TV,$ $UTV$ and $DTV$ follows from Lemma \ref{lem1} and Theorem \ref{THMM}. Monotonicity is obvious. 

We start with the proof of the convexity. Let us fix $c,\varepsilon >0$\ and
consider such a partition $a\leq t_{0}<t_{1}<...<t_{n}\leq b$ of the
interval $\left[ a;b\right] $ that 
\begin{equation*}
UTV^{c}\left( f,\left[ a;b\right] \right) \leq \sum_{i=0}^{n-1}\max \left\{
f\left( t_{i+1}\right) -f\left( t_{i}\right) -c,0\right\} +\varepsilon .
\end{equation*}
Taking $\alpha \in \left[ 0;1\right] $ and $c_{1},c_{2}>0$ such that $%
c=\alpha c_{1}+\left( 1-\alpha \right) c_{2}$\ we have the inequality 
\begin{multline*}
\max \left\{ f\left( t_{i+1}\right) -f\left( t_{i}\right) -\alpha
c_{1}-\left( 1-\alpha \right) c_{2},0\right\}  \\
=\max \left\{ \alpha \left( f\left( t_{i+1}\right) -f\left( t_{i}\right)
-c_{1}\right) +\left( 1-\alpha \right) \left( f\left( t_{i+1}\right)
-f\left( t_{i}\right) -c_{2}\right) ,0\right\}  \\
\leq \alpha \max \left\{ f\left( t_{i+1}\right) -f\left( t_{i}\right)
-c_{1},0\right\} +\left( 1-\alpha \right) \max \left\{ f\left(
t_{i+1}\right) -f\left( t_{i}\right) -c_{2},0\right\} .
\end{multline*}
Now 
\begin{eqnarray*}
UTV^{c}\left( f,\left[ a;b\right] \right)  &\leq &\sum_{i=0}^{n-1}\max
\left\{ f\left( t_{i+1}\right) -f\left( t_{i}\right) -c,0\right\}
+\varepsilon  \\
&\leq &\alpha \sum_{i=0}^{n-1}\max \left\{ f\left( t_{i+1}\right) -f\left(
t_{i}\right) -c_{1},0\right\}  \\
&&+\left( 1-\alpha \right) \sum_{i=0}^{n-1}\max \left\{ f\left(
t_{i+1}\right) -f\left( t_{i}\right) -c_{2},0\right\} +\varepsilon  \\
&\leq &\alpha UTV^{c_{1}}\left( f,\left[ a;b\right] \right) +\left( 1-\alpha
\right) UTV^{c_{2}}\left( f,\left[ a;b\right] \right) +\varepsilon .
\end{eqnarray*}
Since $\varepsilon $ may be arbitrary small, we obtain the convexity
assertion. From convexity and monotonicity we obtain the continuity
assertion.

The same properties of $DTV$ and $TV$ follow immediately from (\ref{A1}) and
(\ref{A2}).

The fact that for $c\geq \left\| f\right\| _{osc},$ $TV^{c}\left( f,\left[ a;b\right] \right) =0$ 
follows easily from equality 
$$\max \left\{ \left| f\left( t_{i+1}\right) -f\left( t_{i}\right) \right| -c,0\right\} =0 $$
satisfied for any such $c$ and $t_{i}, t_{i+1} \in [a;b].$
\end{proof}

\section{Application of the truncated variation to stochastic processes
with c\`{a}dl\`{a}g paths}

\subsection{Optimality of truncated variation processes}

Now we will apply the results of the previous section to a real-valued
stochastic process $\left(X_t\right)_{t \in [a;b]}$ with c\`{a}dl\`{a}g
paths.

By Theorems \ref{thm2} and \ref{THMM} we know that the increments of the
process $X^{i,c}$ defined for $s \in \left[a;b \right]$ as the difference 
\begin{equation}
X_{s}^{i,c}=UTV^{c}\left( X, \left[ a;s\right]\right)-DTV^{c}\left( X, \left[
a;s\right] \right),  \label{xicdef}
\end{equation}
uniformly approximates increments of the process $X$ with accuracy $c$ and
the process $X^{i,c}$ is the unique process starting from $0$ with the
smallest total variation possible on any interval $\left[ a;s\right],$ $s
\in \left(a;b \right],$ with such a property. Moreover, total variation $%
TV\left( X^{i,c}, \left[ a;s\right]\right)$ may be expressed as 
\begin{equation}  \label{xicvar}
TV\left( X^{i,c}, \left[ a;s\right]\right)= TV^{c}\left( X, \left[ a;s\right]
\right).
\end{equation}

It is important to note that stochastic process $X^{i,c}$ constructed with
formula (\ref{xicdef}) is \textbf{adapted} to the natural filtration of the
process $X.$ Since the process $X^{i,c}$ is adapted to the natural
filtration of the process $X,$ we also have the following

\begin{corollary}
\label{cor:xic}
Let $\left( X\right) _{t\in \left[ a;b\right] }$ be a process with c\`{a}dl%
\`{a}g trajectories, independent increments and such that for any $\varepsilon>0$
\begin{equation}
\mathbb{P}\left( \sup_{s,t\in \left[ a;b\right] }\left| X_{s}-X_{t}\right|
<\varepsilon \right) >0.  \label{cor:ineq}
\end{equation}
Assume that $\left( Y\right) _{t\in \left[ a;b\right]}$ 
is a stochastic process adapted to the natural filtration of $X,$ starting
from $0$ and such that the increments of $Y$ uniformly approximate
increments of $X$ with accuracy $c.$ We have $TV\left( Y, \left[ a;s\right]\right)  
\geq TV^{c}\left( X, \left[ a;s\right]\right)$ for any $s \in \left( a;b\right]$ and if the following relation holds a.s. 
\begin{equation}
TV\left( Y, \left[ a;b\right]\right)  \leq TV^{c}\left( X, \left[ a;b\right]\right) ,  \label{cor:adapt}
\end{equation}
then $Y=X^{i,c}$ a.s.
\end{corollary}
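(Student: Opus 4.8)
The plan is to show that $Y=X^{i,c}$ almost surely by arguing first that the two processes must agree up to an additive constant, and then that the constant is zero. We already know from Theorems \ref{thm2} and \ref{THMM} (applied pathwise to the c\`{a}dl\`{a}g trajectories of $X$) that the increments of $X^{i,c}$ uniformly approximate those of $X$ with accuracy $c$, that $X^{i,c}$ starts from $0$, and that for each $s\in(a;b]$ we have $TV(X^{i,c},[a;s])=TV^c(X,[a;s])$. Hence if $Y$ satisfies the hypotheses and $TV(Y,[a;b])\le TV^c(X,[a;b])$ a.s., then by the optimality and uniqueness part of Corollary \ref{cor2} (whose hypotheses are exactly that the increments of $g=Y$ approximate those of $f=X$ with accuracy $c$, that $g$ has finite total variation, and that $g(a)=0$), applied \emph{pathwise}, we conclude $Y=X^{i,c}$ on the event where all these conditions hold. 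The only subtlety is that Corollary \ref{cor2} requires $c\le \|f\|_{osc}=\sup_{s,u\in[a;b]}|X_s-X_u|$ in order to force equality; on the complementary event $\{\|X\|_{osc}<c\}$ we argue separately.

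First I would record the trivial inequality $TV(Y,[a;s])\ge TV^c(X,[a;s])$ for every $s\in(a;b]$: this is immediate from Theorem \ref{thm2} together with $TV(Y,[a;s])=g_U(s)+g_D(s)$, where $g=g(a)+g_U-g_D$ is the minimal decomposition of $Y$; it uses only that the increments of $Y$ approximate those of $X$ with accuracy $c$. In particular $TV(Y,[a;b])\ge TV^c(X,[a;b])$ always, so the hypothesis (\ref{cor:adapt}) forces equality, and hence by monotonicity of $s\mapsto TV(Y,[a;s])-TV^c(X,[a;s])$ (which is nondecreasing, being a difference of a nondecreasing function and a nondecreasing function that it dominates) we get $TV(Y,[a;s])=TV^c(X,[a;s])=TV(X^{i,c},[a;s])$ for \emph{every} $s\in[a;b]$, pathwise, on the event where (\ref{cor:adapt}) holds. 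That puts us in position to invoke the uniqueness clause of Corollary \ref{cor2}.

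The main obstacle is handling the event $A=\{\sup_{s,u\in[a;b]}|X_s-X_u|<c\}$, where Corollary \ref{cor2} does not by itself yield uniqueness. Here I would argue as follows: on $A$ we have $TV^c(X,[a;b])=0$ (since each summand $\max\{|X_{t_{i}}-X_{t_{i-1}}|-c,0\}$ vanishes), so $TV(Y,[a;b])=0$ by (\ref{cor:adapt}), which means $Y$ is constant on $[a;b]$; since $Y(a)=0$ we get $Y\equiv 0$, and likewise $X^{i,c}\equiv 0$ because $UTV^c$ and $DTV^c$ both vanish. Thus $Y=X^{i,c}$ on $A$ as well. It then remains to check that $A$ and its complement together exhaust the probability space up to a null set — which is automatic — and to note that the whole argument is measurable: the set on which $TV(Y,[a;b])\le TV^c(X,[a;b])$ holds is an event, so the a.s.\ hypothesis is meaningful, and the pathwise constructions $X^{i,c}$ and the minimal decomposition of $Y$ are adapted (indeed the adaptedness of $X^{i,c}$ was already observed preceding the Corollary). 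Finally, I would remark that hypotheses (\ref{cor:ineq}) and independence of increments are not actually needed for this particular conclusion and are presumably stated for context or for a companion result; the proof uses only the pathwise uniqueness from Corollary \ref{cor2} plus the separate treatment of $A$.
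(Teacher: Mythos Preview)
Your argument has a genuine gap. The claim that $s\mapsto TV(Y,[a;s])-TV^{c}(X,[a;s])$ is nondecreasing is false: a nonnegative difference of two nondecreasing functions need not be monotone. Worse, the conclusion you draw from it---that equality at $s=b$ forces equality for all $s$---already fails in the deterministic setting, so your pathwise invocation of Corollary~\ref{cor2} is illegitimate. Take $X_t=t$ on $[0;1]$ and $c=1/2$, so that $X^{i,c}(s)=\max\{s-1/2,0\}$ and $TV^{c}(X,[0;1])=1/2$. Let $Y(s)=s/5$ on $[0;1/2]$ and $Y(s)=1/10+\tfrac{4}{5}(s-1/2)$ on $[1/2;1]$. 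Then $Y(0)=0$, $\|Y-X\|_{osc}=1/2=c$, and $TV(Y,[0;1])=1/2=TV^{c}(X,[0;1])$; yet $Y\neq X^{i,c}$ and $TV(Y,[0;1/2])=1/10>0=TV^{c}(X,[0;1/2])$. Your closing remark---that independence of increments and hypothesis~(\ref{cor:ineq}) are not needed---is therefore incorrect: without them the statement is simply false.

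The paper's proof uses those stochastic hypotheses in an essential way. If $Y\neq X^{i,c}$ with positive probability, then by the contrapositive of Corollary~\ref{cor2} there exist $\varepsilon\in(0;c)$ and a random $s_0\in[a;b]$ such that $TV(Y,[a;s_0])>TV^{c}(X,[a;s_0])+\varepsilon$ with probability $p_\varepsilon>0$; because $Y$ is adapted, this event is determined by the path of $X$ up to $s_0$. By independence of increments and~(\ref{cor:ineq}), the event $A(s_0)=\{\sup_{s,t\in[s_0;b]}|X_s-X_t|<\varepsilon\}$ is independent of the former and has probability at least $q_\varepsilon>0$. On $A(s_0)$ the truncated variation can increase by at most $\varepsilon$ after $s_0$, so $TV(Y,[a;b])\ge TV(Y,[a;s_0])>TV^{c}(X,[a;s_0])+\varepsilon\ge TV^{c}(X,[a;b])$, contradicting~(\ref{cor:adapt}) with probability at least $p_\varepsilon q_\varepsilon>0$. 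This is where adaptedness, independence, and the small-oscillation hypothesis all enter, and none of them is dispensable.
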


\begin{proof}
Let us assume that $TV\left( Y, \left[ a;b\right]\right) $ is finite and let $Y_U$ and $Y_D$ be two minimal non-decreasing processes such that for any $s\in \left[ a;b\right] ,$ 
$TV\left( Y, \left[ a;s\right] \right) =Y_U\left(
s\right) +Y_D\left( s\right) $ and $Y=Y_{U}-Y_D.$

By Theorems \ref{thm2} and \ref{THMM} we get 
\begin{equation*}
Y_U\geq UTV^{c}\left( X, \left[ a;s\right]\right) \text{ and }Y_D\geq DTV^{c}\left( X, \left[ a;s\right]\right).
\end{equation*}
If the equality $Y=X^{i,c}$ was not true a.s., then there would exist such $\varepsilon \in \left( 0;c\right) ,$ that with probability 
$p_{\varepsilon }>0$ for some $s_{0}\in \left[ a;b\right] $\ we had 
\begin{equation*}
TV\left( Y, \left[ a;s_{0}\right]\right)  >TV^{c}\left( X, \left[ a;s_{0}\right]\right) +\varepsilon .
\end{equation*}
Consider the event 
\begin{equation*}
A\left(s_{0}\right) =\left\{ \sup_{s,t\in \left[ s_{0};b\right] }\left| X_{s}-X_{t}\right|
<\varepsilon \right\} .
\end{equation*}
We have 
\begin{equation*}
\mathbb{P}\left( A\left(s_{0}\right)\right) \geq \mathbb{P}\left( \sup_{s,t\in \left[ a;b\right] }\left| X_{s}-X_{t}\right| <\varepsilon \right) =:q_{\varepsilon }>0.
\end{equation*}
By the construction of the process $X^{i,c}$ we see that for any $\omega
\in A\left(s_{0}\right)$ we have 
\begin{equation*}
TV\left( X^{i,c}, \left[ a;b\right]\right) = TV^{c}\left( X,  \left[
a;b\right]\right)  \leq TV^{c}\left( X,  \left[
a;s_{0}\right]\right) +\varepsilon ,
\end{equation*}
and if $ TV\left( Y, \left[ a;s_{0}\right]\right)  >TV^{c}\left( X, \left[ a;s_{0}\right]\right) +\varepsilon$ (which is independent from the event  $A\left(s_{0}\right)$), 
\begin{eqnarray*}
TV\left( Y,  \left[ a;b\right]\right)  &\geq &TV\left( Y, \left[ a;s_{0}\right] \right)  \\
&>&TV\left( X^{i,c}, \left[ a;s_{0}\right]\right)  +\varepsilon \geq TV\left(
X^{i,c}, \left[ a;b\right]\right).  
\end{eqnarray*}
Thus, from the independence of the increments of the process $X,$ the inequality 
$ TV\left( Y, \left[ a;b\right]\right)  \leq TV^{c}\left( X, \left[ a;b\right]\right)$ does not hold at least with the probability 
$p_{\varepsilon }\cdot q_{\varepsilon }>0.$
\end{proof}

We have already expressed the process $X^{i,c}$ with the elegant formula (%
\ref{xicdef}). Now, as we did in Subsection \ref{sol1problem} for a
c\`{a}dl\`{a}g function, we may state the problem of finding the process $%
X^c $ with the smallest variation possible, uniformly approximating paths of
the process $X$ with accuracy $c/2.$

By the results of the previous section we notice that we may easily express
the processes $X^{c}$ as 
\begin{equation*}
X^{c}=\alpha +UTV^{c}\left( X, \left[ a;s\right]\right) -DTV^{c}\left( X, %
\left[ a;s\right]\right),
\end{equation*}
where 
\begin{equation*}
\alpha =\left\{ 
\begin{array}{lr}
\inf_{a\leq t<\left(T_{U}^{c}X \right)\wedge b}X_{t}+ c/2 & \text{ if }%
T_{U}^{c}X\leq T_{D}^{c}X, \\ 
\sup_{a\leq t<T_{D}^{c}X}X_{t}-c/2 & \text{ if }T_{U}^{c}X>T_{D}^{c}X.
\end{array}
\right.
\end{equation*}
Notice that the total path variation of $X^c$ also reads as $TV^{c}\left( X, 
\left[ a;b\right]\right).$ Unfortunately, due to the definition of $\alpha
, $ $X^{c}$ may be not adapted (to the natural filtration of $X$) process.
This is the price for the minimality of the variation of $X^c$.

It is easy to see that adapted to the natural filtration and uniformly
approximating paths of $X$ - with accuracy $c$ - is the process 
\begin{equation*}
\tilde{X}^{i,c}_s = X_a + X^{i,c}_s,
\end{equation*}
total variation of which also reads as $TV^{c}\left( X, 
\left[ a;b\right]\right)$,  but here we pay the big price for adaptability - twice time smaller accuracy of
the approximation.

But it is easy to construct another process $\tilde{X}^{c}$, uniformly
approximating paths of $X$ with accuracy $c/2$ and adapted to the natural
filtration of the process $X,$ total variation of which does not differ much
from the total variation of $X^{c}.$ This process is defined in the
following way. Let 
\begin{eqnarray*}
T_{d}^{c} &=&\inf \left\{ s\geq a:X_{a}-\inf_{t\in \left[ a;s\right]
}X_{t}\geq c/2\right\} , \\
T_{u}^{c} &=&\inf \left\{ s\geq a:\sup_{t\in \left[ a;s\right]
}X_{t}-X_{a}\geq c/2\right\} .
\end{eqnarray*}
Assuming that $T_{u}^{c}<T_{d}^{c}$ (in the opposite case we simply consider
the process $-X$) we define the sequence of stopping times $\left(
T_{u,k}^{c}\right) _{k=0}^{\infty },\left( T_{d,k}^{c}\right)
_{k=-1}^{\infty },$ in the following way: $T_{d,-1}^{c}=a,$ $%
T_{u,0}^{c}=T_{u}^{c}$ and for $k=0,1,2,...$ 
\begin{gather*}
T_{d,k}^{c}=\left\{ 
\begin{array}{lr}
\inf \left\{ s\in \left[ T_{u,k}^{c};b\right] :\sup_{t\in \left[
T_{u,k}^{c};s\right] }X_{t}-X_{s}\geq c\right\} & \text{ if }T_{u,k}^{c}<b, \\ 
\infty & \text{ if }T_{u,k}^{c}\geq b,
\end{array}
\right.  \\
T_{u,k+1}^{c}=\left\{ 
\begin{array}{lr}
\inf \left\{ s\in \left[ T_{d,k}^{c};b\right] :X_{s}-\inf_{t\in \left[
T_{d,k}^{c};s\right] }X_{t}\geq c\right\} & \text{ if }T_{d,k}^{c}<b, \\ 
\infty & \text{ if }T_{d,k}^{c}\geq b.
\end{array}
\right. \text{ }
\end{gather*}
Now the process $\tilde{X}^{c}$ is defined in the following way 
\begin{equation*}
\tilde{X}_{s}^{c}=\left\{ 
\begin{array}{lr}
X_{a} & \text{ if }s\in \left[ a;T_{u,0}^{c}\right) ; \\ 
\sup_{t\in \left[ T_{u,k}^{c};s\right] }X_{t}-c/2 & \text{ if }s\in \left[
T_{u,k}^{c};T_{d,k}^{c}\right) ; \\ 
\inf_{t\in \left[ T_{d,k}^{c};s\right] }X_{t}+c/2 & \text{ if }s\in \left[
T_{d,k}^{c};T_{u,k+1}^{c}\right) .
\end{array}
\right. 
\end{equation*}
It is not difficult to see that $\Vert \tilde{X}^{c}-X\Vert _{\infty }\leq
c/2$ and 
\begin{equation*}
TV\left( \tilde{X}^{c},\left[ a;b\right] \right) \leq c/2+TV^{c}\left( X,%
\left[ a;b\right] \right).
\end{equation*}

\section{The Laplace transform of truncated variation process\ of Brownian
motion with drift stopped at exponential time}

In this section we will calculate the Laplace transform of\ truncated
variation process of standard Brownian motion with drift $W,$ i.e. the process $TV^{c}\left( W,s\right) := TV^{c}\left( W,[0;s]\right), s \geq 0,$   stopped at (independent from $W$) exponentially distributed time $S.$

\subsection{The Laplace transform} 

We begin with some auxiliary observations. Firstly let us notice that by
Theorem 2.3, on the set $\left\{ T_{D}^{c}W\geq T_{U}^{c}W 
\right\} ,$ applying definition of sequences $\left(T_{U,k}^{c}\right)_{k=0}^{\infty}$ and $\left(T_{D,k}^{c}\right)_{k=-1}^{\infty}$ (c.f. subsection 2.1) for the function $f=W,$
for $s\geq 0$ we obtain 
\begin{equation*}
TV^{c}\left( W,s\right) =\left\{ 
\begin{array}{lr}
0 & \text{ if }s\in \left[ 0;T_{U,0}^{c}\right) ; \\ 
\sum_{i=0}^{k-1}\left\{ M_{i}^{c}-m_{i}^{c}-c\right\}
+\sum_{i=0}^{k-1}\left\{ M_{i}^{c}-m_{i+1}^{c}-c\right\} \\ 
+M_{k}^{c}\left( T\right) -m_{k}^{c}-c &\text{ if }s\in \left[
T_{U,k}^{c};T_{D,k}^{c}\right) ; \\ 
\sum_{i=0}^{k}\left\{ M_{i}^{c}-m_{i}^{c}-c\right\} +\sum_{i=0}^{k-1}\left\{
M_{i}^{c}-m_{i+1}^{c}-c\right\} \\ 
+M_{k}^{c}-m_{k+1}^{c}\left( T\right) -c & \text{ if }s\in \left[
T_{D,k}^{c};T_{U,k+1}^{c}\right)
\end{array}
\right.
\end{equation*}
(although $T_{U,k}^{c},T_{D,k}^{c}$ were defined for a function with a
domain being the compact interval $\left[ a;b\right] ,$ the extension of
their definition to a function defined on a half-line is straightforward). By
the continuity of Brownian paths, on the set $\left\{ T_{D}^{c}W\geq
T_{U}^{c}W \right\} $ we have 
\begin{equation*}
W\left( T_{U,k}^{c}\right) =m_{k}^{c}+c,W\left( T_{D,k}^{c}\right)
=M_{k}^{c}-c,
\end{equation*}
hence 
\begin{equation*}
TV^{c}\left( W,s\right) =\left\{ 
\begin{array}{lr}
0 & \text{ if }s\in \left[ 0;T_{U,0}^{c}\right) ; \\ 
\sum_{i=0}^{k-1}\left\{ M_{i}^{c}-W\left( T_{U,i}^{c}\right) \right\}
+\sum_{i=0}^{k-1}\left\{ W\left( T_{D,i}^{c}\right) -m_{i+1}^{c}\right\} \\ 
+M_{k}^{c}\left( T\right) -W\left( T_{U,k}^{c}\right) & \text{ if }s\in \left[
T_{U,k}^{c};T_{D,k}^{c}\right) ; \\ 
\sum_{i=0}^{k}\left\{ M_{i}^{c}-W\left( T_{U,i}^{c}\right) \right\}
+\sum_{i=0}^{k-1}\left\{ W\left( T_{D,i}^{c}\right) -m_{i+1}^{c}\right\} \\ 
+W\left( T_{D,k}^{c}\right) -m_{k+1}^{c}\left( T\right) & \text{ if }s\in 
\left[ T_{D,k}^{c};T_{U,k+1}^{c}\right) .
\end{array}
\right.
\end{equation*}

Now for any $0\leq a\leq b<+\infty $ we define two auxiliary functions 
\begin{gather*}
U\left[ a;b\right] =\sup_{a\leq t\leq b}W_{t}-W_{a}, \\
D\left[ a;b\right] =W_{a}-\inf_{a\leq t\leq b}W_{t}
\end{gather*}
and for $s\geq 0$ define two quantities 
\begin{eqnarray*}
U^{c}\left( W,s\right) &=&\sum_{i=0}^{\infty }U\left[ T_{U,i}^{c}\wedge
s;T_{D,i}^{c}\wedge s\right] +\sum_{i=0}^{\infty }D\left[ T_{D,i}^{c}\wedge
s;T_{U,i+1}^{c}\wedge s\right] ; \\
D^{c}\left( W,s\right) &=&\sum_{i=0}^{\infty }D\left[ T_{D,i}^{c}\wedge
s;T_{U,i+1}^{c}\wedge s\right] +\sum_{i=0}^{\infty }U\left[
T_{D,i+1}^{c}\wedge s;T_{U,i+2}^{c}\wedge s\right] .
\end{eqnarray*}

Notice that on the set $\left\{ T_{D}^{c}W\geq T_{U}^{c}W
\right\} $ we have 
\begin{equation*}
TV^{c}\left( W,s\right) =U^{c}\left( W,s\right) .
\end{equation*}
Similarly, if $T_{D}^{c}W<T_{U}^{c}\left( W\right) ,$ then we apply
definitions of sequences $\left(T_{U,k}^{c}\right)_{k=0}^{\infty}$ and $\left(T_{D,k}^{c}\right)_{k=-1}^{\infty}$ for $f=-W$ and obtain 
\begin{equation*}
TV^{c}\left( W,s\right) =U^{c}\left( -W,s\right) .
\end{equation*}

Now let $S$ be an exponential random variable, independent from $W,$ with density $\nu e^{-\nu x}.$ By $M_{TV^{c}\left(
W,S\right) }\left( \lambda \right) $\ we denote moment generating function
of $TV^{c}\left( W,S\right) ,$ i.e. 
\begin{equation*}
M_{TV^{c}\left( W,S\right) }\left( \lambda \right) :=\mathbb{E}\left[ \exp \left(
\lambda \cdot TV^{c}\left( W,S\right) \right) \right] .
\end{equation*}
We have the following equation 
\begin{eqnarray}
\lefteqn{M_{TV^{c}\left( W,S\right) }\left( \lambda \right) } & \notag \\
\lefteqn{=\mathbb{E}\left[ \exp \left( \lambda \cdot U^{c}\left( W,S\right) \right) |S\geq
T_{U,0}^{c},T_{D}^{c}W\geq T_{U}^{c}W\right] \times P\left( S\geq
T_{U,0}^{c},T_{D}^{c}W\geq T_{U}^{c}W\right) } \notag \\
& +\mathbb{E}\left[ \exp \left( \lambda \cdot U^{c}\left( -W,S\right) \right) |S\geq
T_{U,0}^{c},T_{D}^{c}W<T_{U}^{c}W\right] \times P\left( S\geq
T_{U,0}^{c},T_{D}^{c}W<T_{U}^{c}W\right)  \notag \\
& +\mathbb{P}\left( \min \left\{ T_{U}^{c}W,T_{D}^{c}W\right\} >S\right) .
\label{MTV}
\end{eqnarray}
By the lack of memory of exponential distribution, strong Markov property and the
independence of the increments of Brownian motion we have
\begin{eqnarray}
\lefteqn{\mathbb{E}\left[ \exp \left( \lambda \cdot U^{c}\left( W,S\right) \right) |S\geq
T_{U,0}^{c},T_{D}^{c}W\geq T_{U}^{c}W\right]}  \notag \\
\lefteqn{=\mathbb{E}\exp \left( \lambda \cdot U^{c}\left( W,S+T_{U,0}^{c}\right) \right) 
= \mathbb{E} \left[ \exp \left( \lambda \cdot U^{c}\left( W,S+T_{U,0}^{c}\right) 
\right) ;S<T_{D,0}-T_{U,0}^{c}\right]  }\notag \\
&+\mathbb{E}\left[ \exp \left( \lambda \cdot U^{c}\left( W,S+T_{U,0}^{c}\right)
\right) ;S\geq T_{D,0}-T_{U,0}^{c}\right]  \notag \\
\lefteqn{=\mathbb{E}\left[ \exp \left( \lambda \cdot U\left[ T_{U,0}^{c};S+T_{U,0}^{c}\right]
\right) ;S<T_{D,0}^{c}-T_{U,0}^{c}\right] } \notag \\
&+\mathbb{E}\left[ \exp \left\{ \lambda \cdot U\left[ T_{U,0}^{c};T_{D,0}^{c}\right]
+\lambda \cdot D^{c}\left( W,S+T_{U,0}^{c}\right) \right\} ;S\geq
T_{D,0}^{c}-T_{U,0}^{c}\right]  \notag \\
\lefteqn{=\mathbb{E}\left[ \exp \left( \lambda \cdot U\left[ T_{U,0}^{c};S+T_{U,0}^{c}\right]
\right) ;S<T_{D,0}^{c}-T_{U,0}^{c}\right]   }  \notag \\
&+\mathbb{E}\left[ \exp \left( \lambda \cdot U\left[ T_{U,0}^{c};T_{D,0}^{c}\right]
\right) ;S\geq T_{D,0}^{c}-T_{U,0}^{c}\right] \mathbb{E}\exp \left( \lambda \cdot
D^{c}\left( W,S+T_{D,0}^{c}\right) \right). \label{expression1}
\end{eqnarray}
Notice that in all the calculations above, except the first line, the
starting value of $T_{U,0}^{c}\geq 0$ is irrelevant, we need only to know
the recursive definitions of $T_{D,0}^{c},T_{U,1}^{c},...;$ thus we may set 
$T_{U,0}^{c}=0$ and we have 
\begin{equation}
\mathbb{E}\left[ \exp \left( \lambda \cdot U^{c}\left( W,S+T_{U,0}^{c}\right)
\right) ;S<T_{D,0}^{c}-T_{U,0}^{c}\right]  =\mathbb{E}\left[ \exp \left( \lambda \cdot \sup_{0\leq t\leq S}W_{t}\right)
;S<T_{D}^{c}W\right] ,  \label{a}
\end{equation}
and 
\begin{equation}
\mathbb{E}\left[ \exp \left( \lambda \cdot U\left[ T_{U,0}^{c};T_{D,0}^{c}\right]
\right) ;S\geq T_{D,0}^{c}-T_{U,0}^{c}\right] = \mathbb{E} \left[ \exp \left( \lambda \cdot \sup_{0\leq t\leq
T_{D}^{c}W}W_{t}\right) ;S\geq T_{D}^{c}W\right].  \label{b}
\end{equation}
Similarly 
\begin{eqnarray}
\lefteqn{\mathbb{E}\exp \left( \lambda \cdot D^{c}\left( W,S+T_{D,0}^{c}\right) \right) }
\notag \\
\lefteqn{ = \mathbb{E}\left[ \exp \left( \lambda \cdot D^{c}\left( W,S+T_{D,0}^{c}\right)
\right) ;S<T_{U,1}^{c}-T_{D,0}^{c}\right] }  \notag \\
&+\mathbb{E}\exp \left( \lambda \cdot D\left[ T_{D,0}^{c};T_{U,1}^{c}\right] ;S\geq
T_{U,1}^{c}-T_{D,0}^{c}\right) \mathbb{E} \left[ \exp \left( \lambda \cdot U^{c}\left(
W,S+T_{U,1}^{c}\right) \right) \right]   \label{expression2}
\end{eqnarray}
and similarly we get 
\begin{equation}
\mathbb{E} \left[ \exp \left( \lambda \cdot D^{c}\left( W,S+T_{D,0}^{c}\right)
\right) ;S<T_{U,1}^{c}-T_{D,0}^{c}\right]  = \mathbb{E} \left[ \exp \left( -\lambda \cdot \inf_{0\leq t\leq S}W_{t}\right)
;S<T_{U}^{c}W\right]  \label{d}
\end{equation}
and 
\begin{equation}
\mathbb{E} \exp \left( \lambda \cdot D\left[ T_{D,0}^{c};T_{U,1}^{c}\right] \right) 
 = \mathbb{E} \left[ \exp \left( -\lambda \cdot \inf_{0\leq t\leq
T_{U}^{c}W}W_{t}\right) ;S\geq T_{U}^{c}W\right] .  \label{e}
\end{equation}

Now, substituting in (\ref{expression1}) expression (\ref{expression2}) for
$\mathbb{E} \exp \left( \lambda \cdot D^{c}\left( W,S+T_{D,0}^{c}\right) \right) ,$ and
using (\ref{a})-(\ref{b}) and (\ref{d})-(\ref{e})\ we get 
\begin{eqnarray}
\lefteqn{ \mathbb{E} \left[ \exp \left( \lambda \cdot U^{c}\left( W,S\right) \right) |S\geq
T_{U,0}^{c},T_{D}^{c}W\geq T_{U}^{c}W\right] } \notag \\
& = \frac{\mathbb{E} \left[ \exp \left( \lambda \cdot \sup_{0\leq t\leq S}W_{t}\right)
;S<T_{D}^{c}W\right] }
{1-\mathbb{E}
\left[ \exp \left( \lambda \cdot \sup_{0\leq t\leq T_{D}^{c}W}W_{t}\right)
;S\geq T_{D}^{c}W\right] \mathbb{E}\left[ \exp \left( -\lambda \cdot \inf_{0\leq
t\leq T_{U}^{c}W}W_{t}\right) ;S\geq T_{U}^{c}W\right] }  \notag  \\
&  +  \frac{\mathbb{E} \left[ \exp \left( \lambda \cdot \sup_{0\leq t\leq
T_{D}^{c}W}W_{t}\right) ;S\geq T_{D}^{c}W\right] \mathbb{E} \left[ \exp \left(
-\lambda \cdot \inf_{0\leq t\leq S}W_{t}\right) ;S<T_{U}^{c}W\right] }
{1-\mathbb{E}
\left[ \exp \left( \lambda \cdot \sup_{0\leq t\leq T_{D}^{c}W}W_{t}\right)
;S\geq T_{D}^{c}W\right] \mathbb{E}\left[ \exp \left( -\lambda \cdot \inf_{0\leq
t\leq T_{U}^{c}W}W_{t}\right) ;S\geq T_{U}^{c}W\right] }  \label{g}
\end{eqnarray}

Using results of \cite{Taylor:1975} we will be able to calculate all quantities appearing
in (\ref{g}).

To calculate $\mathbb{E} \left[ \exp \left( \lambda \cdot \sup_{0\leq t\leq
S}W_{t}\right) ;S<T_{D}^{c}W\right] $ we will use formulas appearing in \cite[page 236]{Taylor:1975} . Denote $\tau \left( x\right) =\inf \left\{ t\geq 0:\sup_{0\leq
s\leq t}W_{s}=x\right\} .$\ We have equality (note that in the notation of
\cite{Taylor:1975} $S$ is denoted by $\xi $ with parameter $\beta =\nu ,$ $T$ is denoted
by $T_{D}^{c}W$ and $c$ is denoted by $a$) 
\begin{eqnarray*}
\mathbb{P}\left( \sup_{0\leq t\leq S}W_{t}>x,S<T_{D}^{c}W\right) &=&\mathbb{P}\left( \tau
\left( x\right) <S<T_{D}^{c}W\right) =\mathbb{P} \left( \tau \left( x\right) <S\leq
T_{D}^{c}W\right) \\
&=&\mathbb{P}\left( \tau \left( x\right) \leq T_{D}^{c}W,\tau \left( x\right)
<S\right) -\mathbb{P}\left( \tau \left( x\right) \leq T_{D}^{c}W<S\right) \\
&=&\exp \left( -\theta _{\mu }\left( \nu \right) x\right) \left[ 1-\mathbb{E}\exp
\left( -\nu T_{D}^{c}W\right) \right] \\
&=&\exp \left( -\theta _{\mu }\left( \nu \right) x\right) \left[ 1-e^{-\mu
c}V_{\mu }\left( \nu \right) /\theta _{\mu }\left( \nu \right) \right] ,
\end{eqnarray*}
where we define 
\begin{equation*}
\theta _{\mu }\left( \nu \right) =\sqrt{\mu ^{2}+2\nu }\coth \left( c\sqrt{%
\mu ^{2}+2\nu }\right) -\mu
\end{equation*}
and 
\begin{equation*}
V_{\mu }\left( \nu \right) =\frac{\sqrt{\mu ^{2}+2\nu }}{\sinh \left( c\sqrt{%
\mu ^{2}+2\nu }\right) }\text{ }.
\end{equation*}
Thus, for $\lambda $ such that $\Re \left( \lambda \right) <\theta _{\mu
}\left( \nu \right) ,$ 
\begin{equation*}
\mathbb{E}\left[ \exp \left( \lambda \cdot \sup_{0\leq t\leq S}W_{t}\right)
;S<T_{D}^{c}W\right] =\frac{\theta _{\mu }\left( \nu \right) -e^{-\mu
c}V_{\mu }\left( \nu \right) }{\theta _{\mu }\left( \nu \right) -\lambda }.
\end{equation*}
Further, by definition of $T_{D}^{c}W$ we have $\sup_{0\leq t\leq
T_{D}^{c}W}W_{t}=W_{T_{D}^{c}W}+c.$\ By this and by the independence of $S$\
from $T_{D}^{c}W$ we calculate 
\begin{eqnarray*}
\mathbb{E}\left[ \exp \left( \lambda \cdot \sup_{0\leq t\leq T_{D}^{c}W}W_{t}\right)
;S\geq T_{D}^{c}W\right] &=&\mathbb{E}\left[ \exp \left( \lambda \cdot \sup_{0\leq
t\leq T_{D}^{c}W}W_{t}\right) \exp \left( -\nu T_{D}^{c}W\right) \right] \\
&=&\mathbb{E}\left[ \exp \left( \lambda \cdot \left( W_{T_{D}^{c}W}+c\right) -\nu
T_{D}^{c}W\right) \right] \\
&=&e^{\lambda c}\mathbb{E}\left[ \exp \left( \lambda \cdot W_{T_{D}^{c}W}-\nu
T_{D}^{c}W\right) \right] .
\end{eqnarray*}
Now, utilizing the main result of \cite{Taylor:1975} i.e. equation (1.1), we have 
\begin{equation*}
e^{\lambda c}\mathbb{E}\left[ \exp \left( \lambda \cdot W_{T_{D}^{c}W}-\nu
T_{D}^{c}W\right) \right] =\frac{e^{-\mu c}V_{\mu }\left( \nu \right) }{%
\theta _{\mu }\left( \nu \right) -\lambda }.
\end{equation*}
Similarly, using symmetry, for $\lambda $ such that $\Re \left( \lambda
\right) <\theta _{-\mu }\left( \nu \right) ,$ 
\begin{equation*}
\mathbb{E}\left[ \exp \left( -\lambda \cdot \inf_{0\leq t\leq S}W_{t}\right)
;S<T_{U}^{c}W\right] =\frac{\theta _{-\mu }\left( \nu \right) -e^{\mu
c}V_{\mu }\left( \nu \right) }{\theta _{-\mu }\left( \nu \right) -\lambda }
\end{equation*}
and 
\begin{equation*}
\mathbb{E}\left[ \exp \left( -\lambda \cdot \inf_{0\leq t\leq T_{U}^{c}W}W_{t}\right)
;S\geq T_{U}^{c}W\right] =\frac{e^{\mu c}V_{\mu }\left( \nu \right) }{\theta
_{-\mu }\left( \nu \right) -\lambda }.
\end{equation*}
Substituting the above formulas into (\ref{g}) and simplifying, for 
$\lambda $ such that $\Re \left( \lambda \right) <\min \left\{ \theta _{\mu
}\left( \nu \right) ,\theta _{-\mu }\left( \nu \right) \right\} $\ we obtain 
\begin{eqnarray}
\lefteqn{ \mathbb{E} \left[ \exp \left( \lambda \cdot U^{c}\left( W,S\right) \right) |S\geq
T_{U,0}^{c},T_{D}^{c}W\geq T_{U}^{c}W\right]}  \notag \\
&=\dfrac{\dfrac{\theta _{\mu }\left( \nu \right) -e^{-\mu c}V_{\mu }\left(
\nu \right) }{\theta _{\mu }\left( \nu \right) -\lambda }+\dfrac{e^{-\mu
c}V_{\mu }\left( \nu \right) }{\theta _{\mu }\left( \nu \right) -\lambda }%
\dfrac{\theta _{-\mu }\left( \nu \right) -e^{\mu c}V_{\mu }\left( \nu \right) 
}{\theta _{-\mu }\left( \nu \right) -\lambda }}{1-\dfrac{\theta _{\mu }\left(
\nu \right) -e^{-\mu c}V_{\mu }\left( \nu \right) }{\theta _{\mu }\left( \nu
\right) -\lambda }\dfrac{\theta _{-\mu }\left( \nu \right) -e^{\mu c}V_{\mu
}\left( \nu \right) }{\theta _{-\mu }\left( \nu \right) -\lambda }}  \notag
\\
&=1+\lambda \dfrac{\theta _{-\mu }\left( \nu \right) +e^{-\mu c}V_{\mu
}\left( \nu \right) -\lambda }{\lambda ^{2}+2\nu +2\lambda \mu -2\lambda
\theta _{-\mu }\left( \nu \right) }.  \label{h}
\end{eqnarray}
To obtain formula for $\mathbb{E}\left[ \exp \left( \lambda \cdot U^{c}\left(
-W,S\right) \right) |S\geq T_{U,0}^{c},T_{D}^{c}W<T_{U}^{c}W\right] $ we
need only to change $\mu $ into $-\mu $ in the formula for $ \mathbb{E}\left[ \exp
\left( \lambda \cdot U^{c}\left( W,S\right) \right) |S\geq
T_{U,0}^{c},T_{D}^{c}W\geq T_{U}^{c}W\right] .$

To calculate probabilities appearing in the expression\ (\ref{MTV})\ for 
$M_{TV^{c}\left( W,S\right) }\left( \lambda \right) ,$ i.e. 
\begin{equation*}
\mathbb{P}\left( S\geq T_{U,0}^{c},T_{D}^{c}W\geq T_{U}^{c}W\right) =\mathbb{P}\left( S\geq
T_{U}^{c}W,T_{D}^{c}W>T_{U}^{c}W\right)
\end{equation*}
and 
\begin{equation*}
\mathbb{P}\left( S\geq T_{U,0}^{c},T_{D}^{c}W<T_{U}^{c}W\right) =\mathbb{P}\left( S\geq
T_{D}^{c}W,T_{D}^{c}W<T_{U}^{c}W\right)
\end{equation*}
we will use results of \cite{Lochowski:2011}. Since $S$ is independent from $\left(
T_{D}^{c}W,T_{U}^{c}W\right) ,$ we have 
\begin{eqnarray*}
\mathbb{P}\left( S\geq T_{U}^{c}W,T_{D}^{c}W>T_{U}^{c}W\right) &=&\mathbb{E}e^{-\nu
T_{U}^{c}W}I_{\left\{ T_{D}^{c}W>T_{U}^{c}W\right\} }, \\
\mathbb{P}\left( S\geq T_{D}^{c}W,T_{D}^{c}W<T_{U}^{c}W\right) &=&\mathbb{E}e^{-\nu
T_{D}^{c}W}I_{\left\{ T_{U}^{c}W>T_{D}^{c}W\right\} }.
\end{eqnarray*}
Using formula just below formula 19 in \cite{Lochowski:2011}, with $y=0,$ we get 
\begin{equation*}
\mathbb{E} e^{-\nu T_{D}^{c}W}I_{\left\{ T_{U}^{c}W>T_{D}^{c}W\right\} }=\left(
1-L_{0}^{-W}\left( \nu ;c\right) \right) \mathbb{E} e^{-\nu T_{D}^{c}W},
\end{equation*}
where (cf.  \cite[last but one formula on the page 389]{Lochowski:2011}) we have 
\begin{eqnarray*}
L_{0}^{-W}\left( \nu ;c\right) &=&\frac{\sqrt{\mu ^{2}+2\nu }}{2\nu }\left\{ 
\frac{e^{\mu c}\theta _{\mu }\left( \nu \right) }{\sinh \left( c\sqrt{\mu
^{2}+2\nu }\right) }-\frac{\sqrt{\mu ^{2}+2\nu }}{\sinh \left( c\sqrt{\mu
^{2}+2\nu }\right) ^{2}}\right\} \\
&=&\frac{V_{\mu }\left( \nu \right) }{2\nu }\left( e^{\mu c}\theta _{\mu
}\left( \nu \right) -V_{\mu }\left( \nu \right) \right) .
\end{eqnarray*}
Thus 
\begin{equation*}
\mathbb{P}\left( S\geq T_{D}^{c}W,T_{D}^{c}W<T_{U}^{c}W\right) =\left( 1-\frac{V_{\mu
}\left( \nu \right) }{2\nu }\left( e^{\mu c}\theta _{\mu }\left( \nu \right)
-V_{\mu }\left( \nu \right) \right) \right) \frac{e^{-\mu c}V_{\mu }\left(
\nu \right) }{\theta _{\mu }\left( \nu \right) }
\end{equation*}
and similarly 
\begin{equation*}
\mathbb{P}\left( S\geq T_{U}^{c}W,T_{D}^{c}W>T_{U}^{c}W\right) =\left( 1-\frac{V_{\mu
}\left( \nu \right) }{2\nu }\left( e^{-\mu c}\theta _{-\mu }\left( \nu
\right) -V_{\mu }\left( \nu \right) \right) \right) \frac{e^{\mu c}V_{\mu
}\left( \nu \right) }{\theta _{-\mu }\left( \nu \right) }.
\end{equation*}
Now, by (\ref{MTV}), (\ref{h}) and calculations above, we have
\begin{eqnarray*}
& \lefteqn{ M_{TV^{c}\left( W,S\right) }\left( \lambda \right) } \\
&=&\left( 1+\lambda \dfrac{\theta _{-\mu }\left( \nu \right) +e^{-\mu
c}V_{\mu }\left( \nu \right) -\lambda }{\lambda ^{2}+2\nu +2\lambda \mu
-2\lambda \theta _{-\mu }\left( \nu \right) }\right) P\left( S\geq
T_{U}^{c}W,T_{D}^{c}W>T_{U}^{c}W\right) \\
&&+\left( 1+\lambda \dfrac{\theta _{\mu }\left( \nu \right) +e^{\mu c}V_{\mu
}\left( \nu \right) -\lambda }{\lambda ^{2}+2\nu -2\lambda \mu -2\lambda
\theta _{\mu }\left( \nu \right) }\right) P\left( S\geq
T_{D}^{c}W,T_{D}^{c}W<T_{U}^{c}W\right) \\
& &  + 1-\mathbb{P}\left( S\geq T_{U}^{c}W,T_{D}^{c}W>T_{U}^{c}W\right) -P\left( S\geq
T_{D}^{c}W,T_{D}^{c}W<T_{U}^{c}W\right) \\
&= &1+\lambda \dfrac{\theta _{-\mu }\left( \nu \right) +e^{-\mu c}V_{\mu
}\left( \nu \right) -\lambda }{\lambda ^{2}+2\nu +2\lambda \mu -2\lambda
\theta _{-\mu }\left( \nu \right) }\left( e^{\mu c}-\dfrac{V_{\mu }\left( \nu
\right) }{2\nu }\theta _{-\mu }\left( \nu \right) +e^{\mu c}\dfrac{V_{\mu
}\left( \nu \right) ^{2}}{2\nu }\right) \dfrac{V_{\mu }\left( \nu \right) }{%
\theta _{-\mu }\left( \nu \right) } \\
&&+\lambda \dfrac{\theta _{\mu }\left( \nu \right) +e^{\mu c}V_{\mu }\left(
\nu \right) -\lambda }{\lambda ^{2}+2\nu -2\lambda \mu -2\lambda \theta
_{\mu }\left( \nu \right) }\left( e^{-\mu c}-\dfrac{V_{\mu }\left( \nu
\right) }{2\nu }\theta _{\mu }\left( \nu \right) +e^{-\mu c}\dfrac{V_{\mu
}\left( \nu \right) ^{2}}{2\nu }\right) \dfrac{V_{\mu }\left( \nu \right) }{%
\theta _{\mu }\left( \nu \right) }.
\end{eqnarray*}
Thus we have obtained

\begin{thm}
Let $W$ be a standard Wiener process with drift $\mu $ and $S$ be an
exponential random variable with density $\nu e^{-\nu x}I_{\left\{ x\geq
0\right\} },$ independent from $W.$ For any complex $\lambda $ such that $\Re \left( \lambda \right) <\min
\left\{ \theta _{\mu }\left( \nu \right) ,\theta _{-\mu }\left( \nu \right)
\right\} $ one has 
\begin{eqnarray}
& \lefteqn{\mathbb{E}\left[ \exp \left(
\lambda \cdot TV^{c}\left( W,S\right) \right) \right] } \label{MTV1} \\
&=&1+\lambda \frac{\theta _{-\mu }\left( \nu \right) +e^{-\mu c}V_{\mu
}\left( \nu \right) -\lambda }{\lambda ^{2}+2\nu +2\lambda \mu -2\lambda
\theta _{-\mu }\left( \nu \right) }\left( e^{\mu c}-\frac{V_{\mu }\left( \nu
\right) }{2\nu }\theta _{-\mu }\left( \nu \right) +e^{\mu c}\frac{V_{\mu
}\left( \nu \right) ^{2}}{2\nu }\right) \frac{V_{\mu }\left( \nu \right) }{%
\theta _{-\mu }\left( \nu \right) }  \notag \\
&&+\lambda \frac{\theta _{\mu }\left( \nu \right) +e^{\mu c}V_{\mu }\left(
\nu \right) -\lambda }{\lambda ^{2}+2\nu -2\lambda \mu -2\lambda \theta
_{\mu }\left( \nu \right) }\left( e^{-\mu c}-\frac{V_{\mu }\left( \nu
\right) }{2\nu }\theta _{\mu }\left( \nu \right) +e^{-\mu c}\frac{V_{\mu
}\left( \nu \right) ^{2}}{2\nu }\right) \frac{V_{\mu }\left( \nu \right) }{%
\theta _{\mu }\left( \nu \right) }.  \notag
\end{eqnarray}
\end{thm}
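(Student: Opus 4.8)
The plan is to reduce the computation of $\mathbb{E}[\exp(\lambda\,TV^{c}(W,S))]$ first to a renewal-type fixed point equation and then to a handful of known Laplace transforms. First I would apply Theorem~\ref{THMM} to the path $f=W$ to express $TV^{c}(W,s)$ through the alternating stopping times $T_{U,k}^{c},T_{D,k}^{c}$; since Brownian paths are continuous one has $W(T_{U,k}^{c})=m_{k}^{c}+c$ and $W(T_{D,k}^{c})=M_{k}^{c}-c$, which turns the telescoping sums of Theorem~\ref{THMM} into a sum of the overshoot quantities $U[a;b]=\sup_{a\le t\le b}W_{t}-W_{a}$ and $D[a;b]=W_{a}-\inf_{a\le t\le b}W_{t}$ over the successive half-excursion intervals. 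This gives $TV^{c}(W,s)=U^{c}(W,s)$ on $\{T_{D}^{c}W\ge T_{U}^{c}W\}$ and $TV^{c}(W,s)=U^{c}(-W,s)$ on the complement, whence $M_{TV^{c}(W,S)}(\lambda)$ decomposes as in~(\ref{MTV}) into the two conditional generating functions, weighted by $\mathbb{P}(S\ge T_{U,0}^{c},T_{D}^{c}W\ge T_{U}^{c}W)$ and its mirror image, plus the complementary mass $\mathbb{P}(\min\{T_{U}^{c}W,T_{D}^{c}W\}>S)$.

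Next I would derive the self-referential identity for $\mathbb{E}[\exp(\lambda\,U^{c}(W,S))\mid S\ge T_{U,0}^{c},T_{D}^{c}W\ge T_{U}^{c}W]$. The inputs are the lack of memory of $S$ (so that, conditioned to exceed $T_{U,0}^{c}$, it may be replaced by $T_{U,0}^{c}$ plus an independent copy of $S$), the strong Markov property of $W$ at $T_{D,0}^{c}$ and at $T_{U,1}^{c}$, and independence of Brownian increments. Splitting according to whether $S$ is reached before or after the end of the current up-excursion at $T_{D,0}^{c}$, and inside the latter case according to whether $S$ precedes the end of the following down-excursion at $T_{U,1}^{c}$, and then exploiting the irrelevance of the starting point to set $T_{U,0}^{c}=0$, yields~(\ref{expression1})--(\ref{expression2}), i.e. an equation of the shape $G=A+B\,G$ whose solution is the geometric-type expression~(\ref{g}). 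The factor $B$ is a product of two terms of the form ''reach the far end of a half-excursion before $S$''; I must check that $|B|<1$ on the half-plane $\{\Re\lambda<\min\{\theta_{\mu}(\nu),\theta_{-\mu}(\nu)\}\}$ so that the geometric summation is legitimate, which is where the stated range of $\lambda$ comes from.

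It then remains to evaluate the four building blocks in~(\ref{g}). For $\mathbb{E}[\exp(\lambda\sup_{0\le t\le S}W_{t});S<T_{D}^{c}W]$ I would use the first-passage formulas of \cite[page~236]{Taylor:1975}: writing $\tau(x)$ for the hitting time of the running-maximum level $x$, the event $\{\sup_{0\le t\le S}W_{t}>x,S<T_{D}^{c}W\}$ coincides up to a null set with $\{\tau(x)<S\le T_{D}^{c}W\}$, and subtracting $\mathbb{P}(\tau(x)\le T_{D}^{c}W<S)$ from $\mathbb{P}(\tau(x)\le T_{D}^{c}W)$ gives the exponential tail $e^{-\theta_{\mu}(\nu)x}[1-e^{-\mu c}V_{\mu}(\nu)/\theta_{\mu}(\nu)]$, whose transform in $x$ is the claimed rational function. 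For $\mathbb{E}[\exp(\lambda\sup_{0\le t\le T_{D}^{c}W}W_{t});S\ge T_{D}^{c}W]$ I would use $\sup_{0\le t\le T_{D}^{c}W}W_{t}=W_{T_{D}^{c}W}+c$ together with equation (1.1) of \cite{Taylor:1975} for $\mathbb{E}[\exp(\lambda W_{T_{D}^{c}W}-\nu T_{D}^{c}W)]$; the two companion quantities involving $\inf_{0\le t\le\cdot}W_{t}$ and $T_{U}^{c}W$ are obtained by the substitution $\mu\mapsto-\mu$. Plugging these into~(\ref{g}) and simplifying the resulting rational expression produces the compact right-hand side of~(\ref{h}), and the same substitution $\mu\mapsto-\mu$ gives the conditional generating function on $\{T_{D}^{c}W<T_{U}^{c}W\}$.

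Finally I would compute the two weights, writing $\mathbb{P}(S\ge T_{U}^{c}W,T_{D}^{c}W>T_{U}^{c}W)=\mathbb{E}\,e^{-\nu T_{U}^{c}W}I_{\{T_{D}^{c}W>T_{U}^{c}W\}}$ and symmetrically, and evaluating these from \cite{Lochowski:2011} (the formula just below formula 19 there with $y=0$, together with the closed form for $L_{0}^{-W}(\nu;c)$); the complementary probability $\mathbb{P}(\min\{T_{U}^{c}W,T_{D}^{c}W\}>S)$ is then $1$ minus these two. Substituting everything into~(\ref{MTV}) and collecting terms gives~(\ref{MTV1}). I expect the genuine probabilistic content to be modest — it all rests on Theorem~\ref{THMM} and on the cited transforms of \cite{Taylor:1975} and \cite{Lochowski:2011} — so the main obstacle is bookkeeping: verifying convergence of the geometric series on the stated half-plane, keeping the $e^{\pm\mu c}$ factors coming from the deterministic overshoots $\pm c$ in their right places, and pushing the final rational-function simplification all the way to the closed form~(\ref{MTV1}).
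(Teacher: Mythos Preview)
Your proposal is correct and follows the paper's argument essentially step for step: the reduction via Theorem~\ref{THMM} and path continuity to the $U^{c}/D^{c}$ representation, the decomposition~(\ref{MTV}), the renewal identity obtained from memorylessness and the strong Markov property leading to~(\ref{g}), evaluation of the four building blocks via \cite{Taylor:1975}, and the weight probabilities via \cite{Lochowski:2011}. Your added remark about verifying $|B|<1$ on the stated half-plane is a point the paper leaves implicit, and is worth making explicit.
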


\subsection{Examples of applications of the formula (\ref{MTV1})}

\subsubsection{The first and the second moment of truncated variation process of Brownian motion
with drift stopped at exponential time}

Differentiating formula (\ref{MTV1}) we obtain 
\begin{equation}
\mathbb{E}TV^{c}\left( W,S\right) = \left[ \frac{\partial }{\partial \lambda }%
M_{TV^{c}\left( W,S\right) }\left( \lambda \right) \right] _{\lambda =0} 
=\frac{V_{\mu }\left( \nu \right) }{\nu }\cosh \left( c\mu \right)
\label{ETV}
\end{equation}
which agrees with the relation 
\begin{equation}
TV^{\mu }\left( W,S\right) =UTV^{c }\left( W,S\right) +DTV^{c }\left(
W,S\right)  \label{rel1}
\end{equation}
and already obtained in \cite{Lochowski:2011} formulas 
\begin{equation}
\mathbb{E}UTV^{c }\left( W,S\right) =\frac{e^{\mu c}V_{\mu }\left( \nu \right) }{%
2\nu },\mathbb{E}DTV^{c }\left( W,S\right) =\frac{e^{-\mu c}V_{\mu }\left( \nu
\right) }{2\nu }.  \label{rel3}
\end{equation}

Similarly, we calculate 
\begin{eqnarray}
\mathbb{E}TV^{c}\left( W,S\right) ^{2} &=&\left[ \frac{\partial ^{2}}{\partial
\lambda ^{2}}M_{TV^{c}\left( W,S\right) }\left( \lambda \right) \right]
_{\lambda =0}  \notag \\
&=&\frac{V_{\mu }\left( \nu \right) }{\nu ^{2}}\left( V_{\mu }\left( \nu
\right) +\cosh \left( \mu c\right) \theta _{\mu }\left( \nu \right) +e^{\mu
c}\mu \right) .  \label{rel2}
\end{eqnarray}
\begin{rem}
Inverting formulas (\ref{ETV}) and (\ref{rel2}), similarly as it was
done with formula (\ref{rel3}) in \cite[subsection 4.1]{Lochowski:2011} we may calulate
the first and the second moment of $TV^{c}\left( W,T\right) ,$ where $T$ is
a deterministic time. To invert the formula (\ref{rel2}) one may use
formulas from \cite[page 642]{BorodinSalminen:2002}.
\end{rem}
\subsubsection{Covariance of upward and downward truncated variation processes of Brownian motion with
drift stopped at exponential time}

Using (\ref{rel1}), (\ref{rel2}) and (\ref{rel3}) as well as results of
\cite[subsection 4.3]{Lochowski:2011} we are able to calculate the covariance of $UTV^{\mu
}\left( W,S\right) $ and $DTV^{c }\left( W,S\right) .$ Indeed, we have 
\begin{eqnarray}
&\lefteqn{ \mathbb{E}\left( UTV^{c }\left( W,S\right) \cdot DTV^{c }\left( W,S\right) %
\right) }  \notag \\
&=&\frac{1}{2}\left( \mathbb{E}TV^{c}\left( W,S\right) ^{2}-\mathbb{E}UTV^{c}\left( W,S\right)
^{2}-\mathbb{E}DTV^{c}\left( W,S\right) ^{2}\right)   \notag \\
&=&\frac{V_{\mu }\left( \nu \right) ^{2}}{2\nu ^{2}},  \label{EUTVDTV}
\end{eqnarray}
where we have used (\ref{rel2}), the folowing formula (cf. \cite[subsection 4.3]{Lochowski:2011}) 
\begin{eqnarray}
\mathbb{E}UTV^{c}\left( W,S\right) ^{2} &=&\int_{0}^{\infty }\mathbb{E}UTV^{c}\left(
W,t\right) ^{2}P\left( S\in dt\right)   \notag \\
&=&\nu \int_{0}^{\infty }e^{-\nu t}EUTV^{c}\left( W,t\right) ^{2}dt  \notag
\\
&=&\frac{e^{\mu c}V_{\mu }\left( \nu \right) \left( \mu ^{2}+2\nu -\nu
\left( 1-\cosh \left( 2c\sqrt{\mu ^{2}+2\nu }\right) \right) \right) }{2\nu
^{2}\theta _{\mu }\left( \nu \right) \sinh ^{2}\left( c\sqrt{\mu ^{2}+2\nu }%
\right) }  \notag \\
&=&\frac{e^{\mu c}V_{\mu }\left( \nu \right) \theta _{-\mu }\left( \nu
\right) }{2\nu ^{2}}  \label{EUTV2}
\end{eqnarray}
and the symmetric formula 
\begin{equation*}
\mathbb{E}DTV^{c}\left( W,S\right) ^{2}=\frac{e^{-\mu c}V_{\mu }\left( \nu \right)
\theta _{\mu }\left( \nu \right) }{2\nu ^{2}}.
\end{equation*}
Now we have 
\begin{eqnarray*}
&\lefteqn{ \text{Cov} \left( UTV^{c }\left( W,S\right) ,DTV^{c }\left( W,S\right) \right) }
\\
&=&\mathbb{E}\left( UTV^{c }\left( W,S\right) \cdot DTV^{c }\left( W,S\right) %
\right) -EUTV^{c }\left( W,S\right) \cdot EDTV^{c }\left( W,S\right)  \\
&=&\frac{V_{\mu }\left( \nu \right) ^{2}}{2\nu ^{2}}-\frac{e^{\mu c}V_{\mu
}\left( \nu \right) }{2\nu }\frac{e^{-\mu c}V_{\mu }\left( \nu \right) }{%
2\nu } \\
&=&\frac{V_{\mu }\left( \nu \right) ^{2}}{4\nu ^{2}} 
 =\frac{\mu ^{2}+2\nu }{4\nu ^{2}\left( \sinh \left( c\sqrt{\mu ^{2}+2\nu }\right) \right) ^{2}}>0.
\end{eqnarray*}
Thus we can observe that the correlation between $UTV^{c }\left(
W,S\right) $ and $DTV^{c }\left( W,S\right) $ is positive. This is due to the
fact that the magnitude of $UTV^{c }\left( W,S\right) $ and $DTV^{\mu
}\left( W,S\right) $ is highly dependent on the value of $S.$

\subsubsection{Covariance of UTV and DTV of Brownian motion
with drift}

Performing similar calculations to those in \cite[subsection 4.1]{Lochowski:2011} we may
simply obtain formulas for covariance between $UTV^{c }\left( W,T\right) $ and $DTV^{\mu
}\left( W,T\right) ,$ where $T$ is deterministic. Indeed, denoting by 
$\mathcal{L}_{\nu }^{-1}\left( g\right) $ the inverse of the Laplace
transform of the function $g\left( \nu \right) =\int_{0}^{\infty }e^{-\nu
t}f\left( t\right) dt,$ i.e. the function $f\left( t\right) ,$ we get 
\begin{equation*}
\mathcal{L}_{\nu }^{-1}\left( \nu ^{-3}\right) =t^{2}/2,
\end{equation*}
\begin{eqnarray*}
\mathcal{L}_{\nu }^{-1}\left( \frac{2\nu +\mu ^{2}}{\sinh ^{2}\left( c\sqrt{%
2\nu +\mu ^{2}}\right) }\right)  &=&\mathcal{L}_{\nu }^{-1}\left( \frac{%
2\left( \nu +\mu ^{2}/2\right) }{\sinh ^{2}\left( c\sqrt{2\left( \nu +\mu
^{2}/2\right) }\right) }\right)  \\
&=&e^{-\mu ^{2}t/2}\mathcal{L}_{\nu }^{-1}\left( \frac{2\nu }{\sinh
^{2}\left( c\sqrt{2\nu }\right) }\right) 
\end{eqnarray*}
and, by the second formula on page 642 in \cite{BorodinSalminen:2002} 
\begin{eqnarray*}
\mathcal{L}_{\nu }^{-1}\left( \frac{2\nu }{\sinh ^{2}\left( c\sqrt{2\nu }%
\right) }\right)  &=&4\sum_{k=0}^{\infty }\frac{\Gamma \left( 2+k\right)
e^{-\left( 2c+2kc\right) ^{2}/\left( 4t\right) }}{\sqrt{2\pi }t^{2}\Gamma
\left( 2\right) k!}D_{3}\left( \frac{2c+2kc}{\sqrt{t}}\right)  \\
&=&\frac{8c}{\sqrt{2\pi }}\sum_{k=0}^{\infty }\left( k+1\right) ^{2}\frac{%
4\left( k+1\right) ^{2}c^{2}-3t}{t^{7/2}}e^{-2\left( k+1\right) ^{2}c^{2}/t},
\end{eqnarray*}
where $D_{3}$ denotes parabolic cylinder function of order $3.$ 

Now, by (\ref
{EUTVDTV}) and the Borel convolution theorem we obtain 
\begin{eqnarray}
&\lefteqn{ \mathbb{E}\left( UTV^{c }\left( W,T\right) \cdot DTV^{c }\left( W,T\right) %
\right) }  \notag \\
&=&\mathcal{L}_{\nu }^{-1}\left( \frac{2\nu +\mu ^{2}}{2\nu ^{3}\sinh
^{2}\left( c\sqrt{2\nu +\mu ^{2}}\right) }\right) =\frac{1}{2}\int_{0}^{T}%
\frac{\left( T-t\right) ^{2}}{2}\mathcal{L}_{\nu }^{-1}\left( \frac{2\nu
+\mu ^{2}}{\sinh ^{2}\left( c\sqrt{2\nu +\mu ^{2}}\right) }\right) dt  \notag
\\
&=&\frac{2c}{\sqrt{2\pi }}\sum_{k=0}^{\infty }\left( k+1\right)
^{2}\int_{0}^{T}\left( T-t\right) ^{2}\frac{4\left( k+1\right) ^{2}c^{2}-3t}{%
t^{7/2}}e^{-\mu ^{2}t/2-2\left( k+1\right) ^{2}c^{2}/t}dt.  \label{EUTVDTV1}
\end{eqnarray}
Finally, by (\ref{EUTVDTV1}) and \cite[formula (28)]{Lochowski:2011} (notice that in \cite[formula (28)]{Lochowski:2011} and in \cite[formula (27)]{Lochowski:2011} term $\mu ^{2}t$ shall be changed into 
$\mu ^{2}t/2$) 
\begin{eqnarray}
&\lefteqn{ \text{Cov}\left( UTV^{c }\left( W,T\right), DTV^{c }\left( W,T\right) %
\right) } \notag \\
&=&\frac{2c}{\sqrt{2\pi }}\sum_{k=0}^{\infty }\left( k+1\right)
^{2}\int_{0}^{T}\left( T-t\right) ^{2}\frac{4\left( k+1\right) ^{2}c^{2}-3t}{%
t^{7/2}}e^{-\mu ^{2}t/2-2\left( k+1\right) ^{2}c^{2}/t}dt \notag \\
&&-\frac{1}{2\pi }\left( \sum_{k=0}^{\infty }\int_{0}^{T}\left( T-t\right) 
\frac{\left( 2k+1\right) ^{2}c^{2}-t}{t^{5/2}}e^{-\mu ^{2}t/2-\left(
2k+1\right) ^{2}c^{2}/\left( 2t\right) }dt\right) ^{2}. \label{covvUTVDTVT}
\end{eqnarray}

Numerical experiments show that the formula (\ref{covvUTVDTVT}) gives negative numbers, but the strict proof of this fact is not known for the author. 
\begin{rem}
Inverting formula (\ref{EUTV2}) - using second formula on page 642 in \cite{BorodinSalminen:2002} - and using just calculated covariance it is possible to obtain formula for $\text{Var}UTV^{c}\left( W,T\right)$ and hence $\text{Cor}\left( UTV^{c}\left( W,T\right), DTV^{c}\left( W,T\right)\right)$. However, numerical experiments 
show that obtained formulas are rather unstable for small $c$s. On the other hand, results of \cite{Lochowski:2010fk} give the exact value of $\text{Cor}\left( UTV^{c}\left( W,T\right), DTV^{c}\left( W,T\right)\right)$ as $c \downarrow 0,$ namely 
$$\lim_{c \downarrow 0}\text{Cor}\left( UTV^{c}\left( W,T\right), DTV^{c}\left( W,T\right)\right) = - \frac{1}{2}. $$
\end{rem}

\section*{Acknowledgments}

The author would like to express his gratitude to Prof. Przemys\l aw
Wojtaszczyk from Warsaw University for very helpful conversations which
facilitated the finding of the solutions of the two problems defined in Section 2 
and to Prof. Rimas Norvai\v{s}a from Vilnius University for
pointing out to him the remark about regulated functions. He would like also
thank Prof. Krzysztof Burdzy for encouraging him to submit this paper by
saying that the problems considered are interesting.

\end{document}